\renewcommand\epsilon\varespilon 
\renewcommand\phi\varphi 
\renewcommand\rho\varrho 
\renewcommand \subset \subseteq
\newcommand\balpha{\ba}
\newcommand\AND{\quad\textrm{and}\quad}
\newcommand\ba{\mathbf{a}}  
\newcommand\bb{\mathbf{b}}  
\newcommand\bQ{\mathbb{Q}} 
\newcommand\bR{\mathbb{R}} 
\newcommand\bZ{\mathbb{Z}} 
\newcommand\cA{\mathcal{A}}
\newcommand\cB{\mathcal{B}}
\newcommand\cC{\mathcal{C}}
\newcommand\cK{\mathcal{K}}
\newcommand\cM{\mathcal{M}}
\newcommand\CL{\mathrm{L}} 
\newcommand\ee{\varepsilon}
\newcommand\GrO{\mathcal{O}} 
\newcommand\homega{\widehat{\omega}} 
\newcommand\tP{\widetilde{P}}
\newcommand\minF{F_n}
\newcommand\minba{\ba}
\newcommand\minx{x}
\newcommand\miny{y}
\newcommand\mint{\theta}
\newcommand\ie{\textsl{i.e. }} 
\newcommand\eg{\textsl{e.g. }} 
\newcommand{\norm}[1]{\|#1 \|} 
\newcommand\PP{\mathcal{P}} 
\newcommand\vol{\mathrm{vol}}
\newcommand\la{\langle}
\newcommand\ra{\rangle}
\newcommand{\Vect}[2][]{\left\la #2\right\ra_{#1}} 
\newcommand\pu{\underline{\phi}} 
\newcommand\po{\overline{\phi}} 
\theoremstyle{definition} 
\newtheorem{Def}{Definition}[section]
\theoremstyle{plain} 
\newtheorem{Prop}[Def]{Proposition} 
\newtheorem{Lem}[Def]{Lemma} 
\newtheorem{Thm}{Theorem}[section] 
\newtheorem{Cor}[Def]{Corollary} 
\theoremstyle{remark} 
\newtheorem{Rem}[Def]{Remark} 
\numberwithin{equation}{section} 
\title{On approximation to a real number by algebraic numbers of bounded degree}  
\author{Anthony Poëls} 
\date{}
\newcommand{\Addresses}{{
  \bigskip
  \footnotesize
  \noindent\textsc{Universite Claude Bernard Lyon 1, \\
  Institut Camille Jordan UMR 5208, \\
  69622 Villeurbanne,\\
  France}
  \medskip
  \par\nopagebreak
  \noindent\textit{E-mail address}: \texttt{poels@math.univ-lyon1.fr}
}}
\newcommand{\MSC}{{
  \footnotesize
  \textbf{MSC~2020}: 11J13(Primary), 11J82 (Secondary).
}}
\newcommand{\keysW}{{
  \footnotesize
  \textbf{Keywords}: Wirsing conjecture, approximation by algebraic numbers, polynomial approximation,
  exponents of approximation, transcendance.
}}
\newcommand{\Ack}{{
  \footnotesize

  \textbf{Acknowledgements}: I am very grateful to Damien Roy for his encouragement and the many fruitful
  discussions we had. His careful reading of this work and his many pertinent suggestions helped
  improve the overall presentation and simplify some arguments.
}}
\begin{document} 

\baselineskip=17pt 

\maketitle

\begin{abstract}
    In his seminal 1961 paper, Wirsing studied how well a given transcendental real number $\xi$ can be approximated by algebraic numbers $\alpha$ of degree at most $n$ for a given positive integer $n$, in terms of the so-called naive height $H(\alpha)$ of $\alpha$. He showed that the infimum $\omega^*_n(\xi)$ of all $\omega$ for which infinitely many such $\alpha$ have $|\xi-\alpha| \le H(\alpha)^{-\omega-1}$ is at least $(n+1)/2$. He also asked if we could even have $\omega^*_n(\xi) \ge n$ as it is generally expected. Since then, all improvements on Wirsing's lower bound were of the form $n/2+\GrO(1)$ until Badziahin and Schleischitz showed in 2021 that $\omega^*_n(\xi) \ge an$ for each $n\ge 4$, with $a=1/\sqrt{3}\simeq 0.577$. In this paper, we use a different approach partly inspired by parametric geometry of numbers and show that $\omega^*_n(\xi) \ge an$ for each $n\ge 2$, with $a=1/(2-\log 2)\simeq 0.765$.
\end{abstract}

\MSC

\keysW

\section{Introduction}

One of the fundamental questions in Diophantine approximation is the following. Given an irrational real number $\xi$, how well can it be approximated by rational numbers? A simple application of Dirichlet's box principle ensures that there exist infinitely many rational numbers $p/q$ with $q\geq 1$ and
\begin{align}
    \label{eq : Dirichlet dim one}
    \left| \xi - \frac{p}{q} \right| \leq \frac{1}{q^2}.
\end{align}
A result which goes back to Khintchine \cite{khintchine1926klasse} ensures that the above property is optimal in the following sense. For any fixed $\ee > 0$, the set of real number $\xi$ for which there exist infinitely many $p/q$ with $|\xi-p/q| \leq 1/q^{2+\ee}$ has Lebesgue measure zero. If we think of rational numbers as algebraic numbers of degree one, then it is natural to generalize the previous question in the following way: given a positive integer $n$, how well can $\xi$ be approximated by algebraic numbers of degree at most $n$? In \cite{koksma1939mahlersche} Koksma introduced a classification of real numbers in terms of the behaviour of
the sequence $(\omega_n^*(\xi))_{n\geq 1}$, where the classical exponent $\omega_n^*(\xi)$ is defined as the supremum of the real numbers $\omega^*>0$ for which the inequalities
\begin{align}
    \label{eq : def omega_n^*}
    0 < |\xi-\alpha| \leq H(\alpha)^{-\omega^*-1}
\end{align}
admit infinitely many solutions in algebraic numbers $\alpha$ of degree at most $n$. Here, $H(\alpha)$ denote the (naive) \textsl{height} of $\alpha$, that is the largest absolute value of the coefficients of its irreducible polynomial over $\bZ$. See \cite[Chapter~VIII, Section 9]{schmidt1996diophantine} and \cite[Section~2]{bugeaud2015exponents} for a motivation of the summand $-1$ appearing in the exponent in \eqref{eq : def omega_n^*}. By a result of Sprind\v zuk \cite{sprindzuk1969mahler} combined with classical transference inequalities (see \cite[Chapter~VIII, Section 9]{schmidt1996diophantine} and \cite[Theorem~2.8]{bugeaud2015exponents}), we have
\begin{align}
    \label{eq : valeur generic de omega_n^*}
    \omega_n^*(\xi) = n
\end{align}
for almost all real numbers $\xi$ with respect to Lebesgue measure. Schmidt's Subspace theorem implies that \eqref{eq : valeur generic de omega_n^*} also holds if $\xi$ is algebraic of degree $\geq n+1$ (see \cite[Chapter~6, Corollary~1E]{schmidt1996diophantine}). However, given a specific transcendental real number $\xi$, it is usually extremely difficult to determine $\omega_n^* (\xi)$. We can find in Wirsing's original 1961 paper \cite{wirsing1961approximation} the following famous problem, which is the main motivation for the present work.

\medskip

\noindent\textbf{Wirsing's problem.} Do we have $\omega_n^*(\xi)\geq n$ for any integer $n\geq 1$ and any transcendental real number $\xi$?

\medskip

So far, and despite a lot of effort, it has been confirmed only for $n = 1$ (this is a consequence of \eqref{eq : Dirichlet dim one}) and for $n = 2$ (by Davenport and Schmidt \cite{davenport1967approximation}, also see \cite{davenport1968theorem}). In his 1961 paper, Wirsing also established the following lower bound
\begin{align}
    \label{eq intro: borne Wirsing}
    \omega_n^*(\xi) \geq \frac{n+1}{2},
\end{align}
valid for each transcendental real number $\xi$. Until very recently, the best lower bounds due to Bernik and Tishchenko \cite{bernik1993integral} and \cite{tishchenko2000approximation, tsishchanka2007approximation} were of the form $n/2+\GrO(1)$. In 2021, Badziahin and Schleischitz made a important breakthrough \cite{badziahin2021improved} by improving on the factor $1/2$ for the first time. For precisely, they showed that for each $n\geq 4$ and each transcendental real number $\xi$, we have
\begin{align*}
    \omega_n^*(\xi) \geq an, \qquad \textrm{where } a = \frac{1}{\sqrt 3} = 0.577\cdots
\end{align*}
Our main result improves the above result as follows.

\begin{Thm}
    \label{Thm : main}
    Let $n$ be an integer $\geq 2$. For any transcendental real number $\xi$, we have
    \begin{align*}
        \omega_n^*(\xi) \geq an,\qquad \textrm{where } a = \frac{1}{2-\log 2} = 0.765\cdots.
    \end{align*}
\end{Thm}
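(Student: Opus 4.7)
My plan is to attack the problem through parametric geometry of numbers, as the introduction hints. For each real $q \geq 0$, set $Q = e^q$, and define, inside the lattice $\bZ_{\leq n}[X] \cong \bZ^{n+1}$, the convex body
\begin{equation*}
    \cK_\xi(q) = \{P \in \bR_{\leq n}[X] \colon H(P) \leq Q,\; |P(\xi)| \leq Q^{-n}\},
\end{equation*}
normalized so that Minkowski's theorem yields $\lambda_1(q) \cdots \lambda_{n+1}(q) \asymp 1$ for its successive minima $\lambda_1(q)\leq\cdots\leq\lambda_{n+1}(q)$. The Schmidt--Summerer theory guarantees that these minima (in logarithmic scale) are continuous piecewise linear functions of $q$. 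Good polynomial approximations to $\xi$ are precisely the vectors realizing $\lambda_1(q)$: they are polynomials $P \in \bZ_{\leq n}[X] \setminus \{0\}$ with $H(P) \ll Q^{1+o(1)}$ and $|P(\xi)| \ll Q^{-\omega_n(\xi)+o(1)}$. The aim is to extract from them algebraic numbers of degree $\leq n$ close to $\xi$.

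\textbf{From polynomials to algebraic numbers.} Given $P$ realizing $\lambda_1(q)$, the classical move is to take the root $\alpha$ of $P$ closest to $\xi$ and invoke $|\xi-\alpha| \ll |P(\xi)|/|P'(\xi)|$ together with $H(\alpha) \ll H(P)$; the minimal polynomial of $\alpha$ has degree $\leq n$ and height $\leq H(P)$, so this gives the desired inequality of type \eqref{eq : def omega_n^*} whenever $|P'(\xi)|$ is not much smaller than $H(P)$. The central obstruction is that $P$ may cluster several of its roots near $\xi$, which forces $|P'(\xi)|$ to be small and loses factors. To bypass this I would combine $P$ with the polynomials realizing the next minima $\lambda_2(q),\ldots,\lambda_d(q)$: by taking suitable resultants, or equivalently analyzing the pencil generated by the first few independent minimizers, one bounds $|P'(\xi)|$ from below in terms of these higher minima. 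In every geometric configuration of the combined graph this should produce a candidate algebraic number $\alpha$ of degree $\leq n$ satisfying $|\xi - \alpha| \leq H(\alpha)^{-\omega(q)-1}$ for some explicit $\omega(q)$ depending on $\lambda_1(q),\ldots,\lambda_{n+1}(q)$.

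\textbf{Optimization and the constant $1/(2-\log 2)$.} Parametric theory confines the combined graph of the $\lambda_i$ to a compact class of piecewise linear configurations obeying Mahler duality and monotonicity constraints. The theorem reduces to showing that $\limsup_{q \to \infty} \omega(q) \geq an$ uniformly over this class, a convex-geometric extremal problem. The appearance of $\log 2$ strongly suggests that the extremal configuration is achieved in a continuous limit, with the worst exponent expressible via an integral of the form $\int_0^1 dt/(1+t) = \log 2$; a matching formal computation would give
\begin{equation*}
    \frac{1}{a} \;=\; 1 + (1-\log 2) \;=\; 2 - \log 2.
\end{equation*}
The main obstacle is unquestionably the construction in the previous paragraph: proving that \emph{every} admissible geometry of the successive minima yields an algebraic number of degree \emph{exactly} $\leq n$ whose height is not too large compared to $H(P)$. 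Once that is in place, the remaining optimization is an intricate but tractable piece of linear/convex programming over the admissible Schmidt--Summerer graphs, and the bound $\omega^*_n(\xi) \geq n/(2-\log 2)$ should drop out.
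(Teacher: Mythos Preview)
Your proposal is a strategy sketch, not a proof, and it leaves the two decisive steps open.

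\textbf{The resultant step.} You correctly identify that the obstruction is a small derivative $|P'(\xi)|$ and propose to cure it by ``suitable resultants'' with polynomials realizing higher minima. But the whole content of the paper lies precisely here. What is actually done is the following: one first manufactures $n+1$ linearly independent $P_1,\dots,P_{n+1}\in\bZ[X]_{\leq n}$ with $\norm{P_1}\leq\cdots\leq\norm{P_{n+1}}$, $P_1,P_2$ coprime, and all $|P_i(\xi)|$ controlled (Lemma~\ref{Lem base construction} and Lemma~\ref{lem: estimations de base pour Wirsing}). Then, for \emph{each} $k=2,\dots,n+1$, one evaluates a nonzero $(N+1)\times(N+1)$ integer determinant with $N=2n-k+1$, built from shifts $X^jP_i$ (a ``generalized resultant''), and expands it along the row of first derivatives at $\xi$ (Proposition~\ref{lem: premiere estimation brute pour xi-alpha}). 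This produces, for each $k$, an algebraic $\alpha$ of degree $\leq n$ with an explicit bound involving $\norm{P_2},\dots,\norm{P_k}$. Taking the maximum over $k$ is what drives the exponent up. Nothing in your outline hints at this multi-$k$ mechanism, and without it one does not get past the Wirsing/Bernik--Tishchenko range $n/2+\GrO(1)$.

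\textbf{The optimization.} The minimization is not over the infinite-dimensional class of Schmidt--Summerer graphs; it is a finite-dimensional problem at a single time $q$. Writing $\norm{P_i}=\norm{P_2}^{a_i}$ and $x=a_2+\cdots+a_{n+1}$, the bound from the resultant step becomes $\omega_n^*(\xi)\geq \max_k A_k(x,\ba)/a_k=:F(x,\ba)$ with $A_k=2x-2(n-k+1)-\sum_{i\leq k}a_i$, and one must minimize $F$ over $\ba\in\cA(x)$, $x\geq n$ (Theorem~\ref{Thm : omega_n^* >= F_0}). The minimizer has $a_2=\cdots=a_\ell=1$ and $a_k=2-\big(\minF/(\minF+1)\big)^{k-\ell}$ for $k\geq\ell$ (Theorem~\ref{Thm : description du min}); setting $\theta=\big(\minF/(\minF+1)\big)^{n+1-\ell}$ one gets $(3-2\theta)\minF=2n+1-\ell$ and $\minF\log\theta\geq -(n+1-\ell)$, hence $(3-2\theta+\log\theta)\minF\geq n$. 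The function $t\mapsto 3-2t+\log t$ peaks at $t=1/2$ with value $2-\log 2$, which is where the constant comes from. Your guess that $\log 2$ arises from $\int_0^1 dt/(1+t)$ in a continuous limit of graphs is not the mechanism; the constant emerges from a one-variable calculus extremum after the discrete recursion for the $a_k$ has been solved.
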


Note that our bounds are better than those obtained in \cite{tsishchanka2007approximation} starting with $n=7$. We believe that the constant $a$ in Theorem~\ref{Thm : main} is not optimal and could be improved by refining our method.

\medskip

Given a transcendental real number $\xi$, Wirsing's approach to showing his lower bound \eqref{eq intro: borne Wirsing} is to construct coprime polynomials $P$ and $Q$ of degree at most $n$, which have integer coefficients and have very small absolute values at $\xi$. Considering their resultant, he then proves that a root of $P$ or $Q$ must be very close to $\xi$. For the proof of Theorem~\ref{Thm : main}, the key-point is to consider simultaneously $n+1$~linearly independent polynomials $P_1,\dots,P_{n+1} \in \bZ[X]_{\leq n}$, instead of just two. This idea has its origins in \cite{poels2024pol}, where we improve the upper bound for the uniform exponent of polynomial approximation.

\medskip

This paper is organized as follows. In Sections~\ref{section : parametric geometry of numbers} and~\ref{section : construction points à considérer}, we construct the aforementioned polynomials $P_i$, which roughly realize the successive minima of a certain symmetric convex body in $\bR[X]_{\leq n}$ (with respect to the lattice of integer polynomials $\bZ[X]_{\leq n}$). We are able to control rather precisely their size and their absolute value at $\xi$. In Section~\ref{Section : semi-resultants}, by evaluating some kind of non-zero generalized resultant, we prove that for each $k=2,\dots,n+1$, at least one of the polynomials $P_1,\dots,P_k$ has a root very close to $\xi$. Taking into account all these approximations, we then conclude in Section~\ref{section: introduction de la fonction F} that $\omega_n^*(\xi)$ is bounded below by the minimum of an explicit function of $n+1$~variables. In the last two Sections~\ref{Section: points realisant le min} and~\ref{section: minration finale}, which are independent from the previous ones, we deal with the optimization problem of finding this minimum. We show that it is at least equal to $n/(2-\log(2))$.

\section{Notation}
\label{Section : notation}

Given a ring $A$ (typically $A=\bR$ or $\bZ$) and an integer $n\geq 0$, we denote by $A[X]$ the ring of polynomials in $X$ with coefficients in $A$, and by $A[X]_{\leq n} \subset A[X]$ the subgroup of polynomials of degree at most $n$. We say that $P\in\bZ[X]$ is \textsl{primitive} if it is non-zero and the greatest common divisor of its coefficients is $1$. Given $P(X)=\sum_{k=0}^{n} a_kX^k \in \bR[X]$, we set
\begin{align*}
    \norm{P} = \max_{0\leq k \leq n} |a_k|.
\end{align*}
For $k=0,\dots,n$, we define
\begin{align*}
    P^{[k]} = \frac{1}{k!} \frac{d^{k\,} P}{d X^k} \in \bR[X].
\end{align*}
Then, for each real number $\xi$, we have
\begin{align*}
    P(X) = \sum_{k=0}^{n} P^{[k]}(\xi) (X-\xi)^k.
\end{align*}
For short, we say that polynomials of $\bR[X]_{\leq n}$ or $\bZ[X]_{\leq n}$ are \textsl{linearly independent} to mean that they are linearly independent over $\bR$. We identify $\bR^{n+1}$ to $\bR[X]_{\leq n}$ via the isomorphism
\[
    (a_0,\dots,a_n)\longmapsto a_0+a_1X+\cdots +a_nX^n.
\]
Then, the volume $\vol(C)$ of a closed set $C\subset\bR[X]_{\leq n}$ is simply the Lebesgue measure of the corresponding set in $\bR^{n+1}$.

\medskip

Let $\xi\in\bR$ be a transcendental number and $n$ a positive integer. The following two classical Diophantine exponents will play an important role in our study. We denote by $\homega_n(\xi)$ (resp. $\omega_n(\xi)$), the supremum of the real numbers $\omega > 0$ such that the system
\begin{align*}
    \norm{P} \leq H \AND 0 <|P(\xi)| \leq H^{-\omega}
\end{align*}
admits a non-zero solution $P\in\bZ[X]_{\leq n}$ for each large enough $H$ (resp. for arbitrarily large $H$). Dirichlet's Theorem implies that
\begin{align*}
    n \leq \homega_n(\xi) \leq \omega_n(\xi)
\end{align*}
(see for example \cite[Chapter 2, Theorem 1C]{schmidt1996diophantine}). The exponent $\omega_n^*(\xi)$, defined as in the introduction, is the supremum of the real numbers $\omega^*>0$ for which there are infinitely many algebraic numbers $\alpha$ of degree at most $n$ satisfying
\begin{align*}
    0 < |\xi-\alpha| \leq H(\alpha)^{-\omega^*-1}.
\end{align*}
Here, $H(\alpha) = \norm{P_\alpha}$, where $P_\alpha$ is the minimal polynomial of $\alpha$ irreducible over $\bZ$ (with positive leading coefficient). The reader may consult \cite{bugeaud2007exponents} for an interesting survey presenting, among others, several transference inequalities between these exponents.

\begin{Rem}
    \label{Rem: section notation, exposants non extremal}
    According to \cite[Theorems~2.6 and~3.1]{bugeaud2015exponents}, we have
    \begin{align*}
        \omega_n^*(\xi) \geq \omega_n(\xi)-n+1,
    \end{align*}
    so that if $\omega_n(\xi) = \infty$, then $\omega_n^*(\xi)  = \infty \geq n$. Consequently, in the following we will often assume that $\omega_n(\xi) < \infty$.
\end{Rem}

By Gelfond's Lemma (see \eg \cite[Lemma A.3]{bugeaud2004approximation} as well as \cite{brownawell1974sequences}), for each non-zero $P,Q\in\bZ[X]_{\leq n}$, if $P$ divides $Q$, then
\begin{equation}
    \label{eq: Gelfond P facteur de Q}
    e^{-n} \norm{P} < \norm{Q}.
\end{equation}
In particular, if $\norm{Q} \leq e^{-n} \norm{P}$, then $P$ cannot be a factor of $Q$.

\medskip

Finally, given two functions $f, g : I \rightarrow [0, +\infty)$ on a set $I$, we write $f = \GrO(g)$ or $f \ll g$ or $g \gg f$ to mean that there is a positive constant $c$ such that $f (x) \leq cg(x)$ for each $x\in I$. We write $f \asymp g$ when both $f \ll g$ and $g \ll f$ hold.

\section{Parametric geometry of numbers}
\label{section : parametric geometry of numbers}

Let $\xi$ be a transcendental real number and $n$ be an integer $\geq 2$. Schmidt and Summerer's parametric geometry of numbers \cite{Schmidt2009, Schmidt2013}, \cite{Roy_juin} is a powerful tool for studying Diophantine exponents. Although we do not need much of this theory, it provides a convenient framework to state the results we need. In this section we first recall some elementary results from parametric geometry of numbers, then we establish several lemmas which form the basis of our future polynomial constructions.

\medskip

Following the approach of Roy \cite{Roy_juin} (with the maximum norm instead of the Euclidean norm), we consider for any parameter $q\geq 0$ the symmetric convex body
\begin{align*}
    \cC_\xi(q) = \Big\{P\in\bR[X]_{\leq n} \,;\, \norm{P} \leq 1 \AND |P(\xi)|\leq e^{-q} \Big\}.
\end{align*}
For $i=1,\dots,n+1$, we define $L_i(q)$ as the smallest real number $L$ such that $e^{L}\cC_\xi(q)\cap \bZ[X]_{\leq n}$ contains at least $i$ linearly independent polynomials. Thus, $e^{L_1(q)},\dots,e^{L_{n+1}(q)}$ are the successive minima of $\cC_\xi(q)$ with respect to the lattice $\bZ[X]_{\leq n}$. We group these minima in a map $\CL_\xi:[0,\infty)\rightarrow \bR^{n+1}$ defined by
\begin{align*}
    \CL_\xi(q) = \big(L_1(q),\cdots,L_{n+1}(q)\big).
\end{align*}
Recall that the functions $L_i$ are continuous, piecewise linear with slopes $0$ and $1$ (they are therefore non-decreasing). Furthermore, since $\vol\big( \cC_\xi(q) \big)\asymp e^{-q}$, Minkowski's second theorem implies that
\begin{align*}
    L_1(q) + \cdots + L_{n+1}(q) = q + \GrO(1), \qquad q\in[0,\infty),
\end{align*}
where the implicit constant depends on $n$ and $\xi$ only. To any non-zero polynomial $P\in\bZ[X]_{\leq n}$ we associate a function $L(P,\cdot)\rightarrow [0,+\infty)$ by setting
\begin{align*}
    L(P,q) = \max\big\{\log \norm{P}, q+\log |P(\xi)| \big\} \qquad (q\in[0,+\infty)).
\end{align*}
Following Roy's terminology \cite[\S 2.2]{Roy_juin}, the \textsl{trajectory} of a non-zero polynomial $P\in\bZ[X]_{\leq n}$ is the graph of the function $L(P,\cdot)$. Note that $L(P,\cdot)$ is continuous, piecewise linear, constant on $[0,q_P]$ and increasing with slope $1$ on $[q_P,\infty)$, where the slope change point $q_P$ is
\begin{align*}
    q_P = \log \norm{P}-\log |P(\xi)|.
\end{align*}
Thus, for each $q\geq 0$, we have
\begin{align*}
    L(P,q) = \left\{\begin{array}{cc}
        \log \norm{P} & \textrm{if $q\leq q_P$}, \\
        \\
        q+\log |P(\xi)| & \textrm{if $q\geq q_P$}.
    \end{array} \right.
\end{align*}
Since, for each $q\geq 0$, the smallest $L\geq 0$ such that $P\in e^L\cC(q)$ is precisely $L(P,q)$, we have
\begin{align}
    \label{eq: def L_1 as min...}
    L_1(q) = \min_{P\in \bZ[X]_{\leq n}\setminus \{0\}} L(P,q).
\end{align}
Moreover, since $\xi$ is transcendental, we have $\lim_{q\rightarrow\infty} L_1(q) = \infty$. Although we will not need them, we have the classical formulas (arguing as in~\cite[Theorem 1.4]{Schmidt2009})
\begin{align*}
    \pu= \liminf_{q\rightarrow \infty} \frac{L_1(q)}{q} = \frac{1}{1+\omega_n(\xi)} \AND \po=\limsup_{q\rightarrow \infty} \frac{L_1(q)}{q} = \frac{1}{1+\homega_n(\xi)}.
\end{align*}
The exponents $\pu$ and $\po$ are parametric versions of the exponents $\omega_n(\xi)$ and $\homega_n(\xi)$.

\begin{Lem}
    \label{Lem: estimation Q(xi) qui réalise L_1}
    Fix $\homega < \homega_n(\xi)$. There exists $q_0 = q_0(\homega) \geq 0$ with the following property. Let $q\in[q_0,\infty)$ and $Q\in\bZ[X]_{\leq n}$ be such that $L(Q,\cdot)$ has slope $1$ on $[q,\infty)$ and coincides with $L_1$ at $q$. Then
    \begin{align*}
        |Q(\xi)| \leq e^{-\homega L_1(q)}.
    \end{align*}
\end{Lem}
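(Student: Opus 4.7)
The plan is to reduce the statement to a direct consequence of the definition of $\homega_n(\xi)$ by exhibiting an appropriate test polynomial. First I would unpack the two hypotheses on $Q$. The condition that $L(Q,\cdot)$ has slope $1$ on $[q,\infty)$ means precisely that $q \geq q_Q$, so that $L(Q,q) = q + \log|Q(\xi)|$; combined with $L(Q,q) = L_1(q)$, this yields the identity
\begin{equation*}
    \log|Q(\xi)| = L_1(q) - q.
\end{equation*}

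With this identity in hand, the desired inequality $|Q(\xi)| \leq e^{-\homega L_1(q)}$ becomes equivalent to $(1+\homega)L_1(q) \leq q$, that is,
\begin{equation*}
    L_1(q) \leq \frac{q}{1+\homega}.
\end{equation*}
So the whole task reduces to establishing this upper bound on $L_1(q)$ for all sufficiently large $q$ (depending on $\homega$).

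To produce it I would use the definition of $\homega_n(\xi)$: since $\homega < \homega_n(\xi)$, there exists $H_0 \geq 1$ such that for every real $H \geq H_0$ there is a non-zero $P \in \bZ[X]_{\leq n}$ with $\norm{P} \leq H$ and $|P(\xi)| \leq H^{-\homega}$. Setting $q_0 := (1+\homega)\log H_0$, for any $q \geq q_0$ I would apply this with the specific value $H := \exp(q/(1+\homega)) \geq H_0$. Then both $\log\norm{P}$ and $q+\log|P(\xi)|$ are at most $q/(1+\homega)$, so that $L(P,q) \leq q/(1+\homega)$, and the characterization \eqref{eq: def L_1 as min...} of $L_1$ as a minimum of the functions $L(P,\cdot)$ over non-zero $P \in \bZ[X]_{\leq n}$ gives the required bound $L_1(q) \leq q/(1+\homega)$.

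There is no serious obstacle here; the argument is essentially a bookkeeping exercise. The only points requiring attention are (a) verifying that the slope-$1$ hypothesis on $L(Q,\cdot)$ really does force the formula $L(Q,q) = q+\log|Q(\xi)|$ (so that the slope assumption is \textsl{not} redundant with the information $L(Q,q) = L_1(q)$), and (b) choosing $q_0$ large enough so that the parameter $H = \exp(q/(1+\homega))$ exceeds the threshold $H_0$ provided by the definition of $\homega_n(\xi)$.
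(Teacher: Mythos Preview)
Your proof is correct and follows essentially the same route as the paper's: both unpack the slope-$1$ hypothesis to get $L(Q,q)=q+\log|Q(\xi)|$, then invoke the definition of $\homega_n(\xi)$ to produce a test polynomial $P$ witnessing the required upper bound via the minimality property \eqref{eq: def L_1 as min...}. The only cosmetic difference is the choice of test parameter: the paper takes $H=e^{L_1(q)}$ (relying on $L_1(q)\to\infty$), whereas you take $H=\exp\big(q/(1+\homega)\big)$ and phrase the conclusion as the parametric inequality $L_1(q)\leq q/(1+\homega)$; both lead immediately to the same estimate.
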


\begin{proof}
    Choose $q\geq 0$ and $Q\in\bZ[X]_{\leq n}$ such that $L(Q,\cdot)$ has slope $1$ on $[q,\infty)$. This means that $L(Q,q) =  q+\log |Q(\xi)|$. We also assume that $L(Q,q) = L_1(q)$ and set $H = e^{L_1(q)}$. By definition of $\homega_n(\xi)$, if $q$ is large enough, there exists a non-zero $P\in\bZ[X]_{\leq n}$ such that
    \begin{align*}
        \norm{P} < H = e^{L_1(q)} \AND |P(\xi)| \leq H^{-\homega}.
    \end{align*}
    Since $L_1(q)\leq L(P,q) = \max\{\log \norm{P}, q+\log |P(\xi)| \}$, this yields $L(P,q) = q+\log |P(\xi)|$, and
    \begin{align*}
        q+\log |Q(\xi)| = L_1(q) \leq q+\log|P(\xi)| \leq q-\homega L_1(q).
    \end{align*}
\end{proof}

\begin{Lem}
    \label{Lem: estimation det facile}
    There exists a constant $c>0$ which depends on $n$ and $\xi$ only such that, for any linearly independent polynomials $P_1,\dots,P_{n+1}\in\bZ[X]_{\leq n}$, we have
    \begin{align*}
        1 \leq  c\norm{P_1}\cdots\norm{P_{n+1}}\sum_{i=1}^{n+1}\frac{|P_i(\xi)|}{\norm{P_i}}.
    \end{align*}
\end{Lem}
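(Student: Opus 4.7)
The plan is to express $P_1,\dots,P_{n+1}$ as the rows of an $(n+1)\times(n+1)$ matrix in two different bases of $\bR[X]_{\leq n}$: the monomial basis $(1,X,\dots,X^n)$, in which integrality forces the determinant to have absolute value at least $1$, and the Taylor-shifted basis $\bigl((X-\xi)^k\bigr)_{k=0}^n$, in which each row contains $P_i(\xi)$ as one of its entries.

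More precisely, let $M=(a_{i,k})_{i,k}$ where $a_{i,k}$ is the coefficient of $X^k$ in $P_i$. Since $P_1,\dots,P_{n+1}$ are linearly independent over $\bR$ and have integer coefficients, $\det M\in\bZ\setminus\{0\}$, so $|\det M|\geq 1$. Writing $P_i(X)=\sum_{k=0}^n P_i^{[k]}(\xi)(X-\xi)^k$ and expanding $X^j=\sum_{k\leq j}\binom{j}{k}\xi^{j-k}(X-\xi)^k$, I would observe that the matrix $N=\bigl(P_i^{[k]}(\xi)\bigr)_{i,k}$ satisfies $N=M\cdot T$ for some lower-triangular $T$ with $1$'s on the diagonal, so $|\det N|=|\det M|\geq 1$.

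For the upper bound, the identity $P_i^{[k]}(\xi)=\sum_{j\geq k}\binom{j}{k}a_{i,j}\xi^{j-k}$ yields $|P_i^{[k]}(\xi)|\leq C\norm{P_i}$ for all $i,k$, with $C=C(n,\xi)$ depending only on $n$ and $\xi$. Expanding $\det N$ by the Leibniz formula,
\begin{align*}
    |\det N|\leq \sum_\sigma \prod_{i=1}^{n+1} |P_i^{[\sigma(i)]}(\xi)|,
\end{align*}
where $\sigma$ ranges over bijections $\{1,\dots,n+1\}\to\{0,\dots,n\}$. In each such $\sigma$, exactly one index $i_0$ satisfies $\sigma(i_0)=0$; grouping the $n!$ permutations with a given $i_0$ and using the bound $C\norm{P_i}$ on the other factors yields
\begin{align*}
    |\det N|\leq n!\,C^n\sum_{i_0=1}^{n+1} |P_{i_0}(\xi)|\prod_{i\neq i_0}\norm{P_i}
    = n!\,C^n\Bigl(\prod_{i=1}^{n+1}\norm{P_i}\Bigr)\sum_{i_0=1}^{n+1}\frac{|P_{i_0}(\xi)|}{\norm{P_{i_0}}},
\end{align*}
which combined with $|\det N|\geq 1$ gives the claim with $c=n!\,C^n$.

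I do not expect a serious obstacle: the argument is a standard integrality-plus-Hadamard estimate. The only substantive idea is the passage to the basis $\bigl((X-\xi)^k\bigr)_k$, which ensures that the small quantity $P_i(\xi)$ appears as an entry of row $i$ and thus as a factor in every permutation term of the Leibniz expansion; a direct expansion in the monomial basis would only yield the useless bound $\prod_i\norm{P_i}$ times a constant.
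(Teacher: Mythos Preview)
Your proof is correct and follows essentially the same approach as the paper: both recognize that the determinant of the coefficient matrix is a non-zero integer, rewrite it as $\det\bigl(P_i^{[k]}(\xi)\bigr)$ via the Taylor change of basis (which the paper phrases as evaluating the same determinant at $\xi$ instead of $0$), and then expand by Leibniz using $P_i^{[0]}(\xi)=P_i(\xi)$ and $|P_i^{[k]}(\xi)|\ll\norm{P_i}$ for $k\geq 1$. Your write-up is simply a more explicit version of the paper's two-line argument.
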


\begin{proof}
    Since $\det(P_1,\dots,P_{n+1})$ is a non-zero integer, we have
    \begin{align*}
        1 \leq \left|\det(P_1,\dots,P_{n+1})\right| = \left|\det\left( P^{[i-1]}_j(0) \right)_{1\leq i,j\leq n+1}\right|
        = \left|\det\left( P^{[i-1]}_j(\xi) \right)_{1\leq i,j\leq n+1}\right|.
    \end{align*}
    We conclude by expanding the last determinant and by noting that for $j=1,\dots,n+1$, we have $P^{[0]}_j(\xi) = P_j(\xi)$ and
    $|P^{[i-1]}_j(\xi)| \ll \norm{P_j}$ ($i=2,\dots,n+1$).
\end{proof}

The following result is crucial for our approach. Under some condition, it provides $n+1$ linearly independent polynomials with integer coefficients which have ``good'' properties: their absolute values are small at $\xi$ and their height are under control. In some way, it is reminiscent of \cite[Theorem~3.1]{Roy_juin}. The idea is to start with a family of polynomials which realize the successive minima of $\cC_\xi(q)$, and then to correct these polynomials to make their absolute values small at~$\xi$.

\begin{Lem}
    \label{Lem base construction}
    Let $q\in[0,\infty)$ and $Q\in \bZ[X]_{\leq n}$ such that $L_1(q) = L(Q,q)$. We suppose that $L(Q,\cdot)$ has slope $1$ on $[q,+\infty)$. Then, there exist linearly independent polynomials $P_1,\dots,P_{n+1}\in\bZ[X]_{\leq n}$ such that $P_1=Q$ and
    \begin{enumerate}
      \item \label{item: lem base construction 1} $|P_i(\xi)| < |P_1(\xi)|$ and $e^{L_i(q)} \leq \norm{P_i} \leq 2e^{L_i(q)}$ for $i=2,\dots,n+1$;
      \item \label{item: lem base construction 2} $\norm{P_1}\leq   \cdots \leq \norm{P_{n+1}}$;
      \item \label{item: lem base construction 3} $|P_1(\xi)|\cdot\norm{P_2}\cdots \norm{P_{n+1}} \asymp 1$, with implicit constants depending only on $n$ and $\xi$.
    \end{enumerate}
\end{Lem}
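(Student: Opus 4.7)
The plan is to start from a basis realizing the successive minima of $\cC_\xi(q)$, correct each basis element beyond the first by an integer multiple of $Q$, and then reorder by norm. First I would show by induction on $i$ that there exist linearly independent $R_1,\dots,R_{n+1}\in\bZ[X]_{\leq n}$ with $R_1=Q$ and $\|R_i\|\leq e^{L_i(q)}$, $|R_i(\xi)|\leq e^{L_i(q)-q}$ for each $i$. At step $i$, the definition of the $i$-th successive minimum furnishes a lattice polynomial in $e^{L_i(q)}\cC_\xi(q)$ outside the $(i-1)$-dimensional span of $R_1,\dots,R_{i-1}$, which I take as $R_i$. The slope-$1$ hypothesis at $q$ translates into the two identities $|Q(\xi)|=e^{L_1(q)-q}$ and $\|Q\|\leq e^{L_1(q)}$ that will be used throughout.

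Next I would set $P_1=Q$, and for each $i\geq 2$ define $P_i=R_i-k_iQ$, where $k_i\in\bZ$ is chosen by rounding $R_i(\xi)/Q(\xi)$ toward zero. This choice yields simultaneously $|k_i|\leq |R_i(\xi)|/|Q(\xi)|\leq e^{L_i(q)-L_1(q)}$ and $|P_i(\xi)|<|Q(\xi)|=|P_1(\xi)|$ (strict, since $P_i\neq 0$ by linear independence together with transcendence of $\xi$), whence
\[
    \|P_i\|\leq \|R_i\|+|k_i|\|Q\|\leq e^{L_i(q)}+e^{L_i(q)-L_1(q)}\cdot e^{L_1(q)}=2e^{L_i(q)}.
\]
The $P_1,\dots,P_{n+1}$ remain linearly independent, as the transformation from the $R_i$ is upper triangular. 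Rounding toward zero (rather than to the nearest integer) is exactly what secures the sharp constant $2$ on the right.

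The delicate step, and the main obstacle, is the matching lower bound $\|P_i\|\geq e^{L_i(q)}$ together with the ordering condition (ii), which I would establish by reordering $P_2,\dots,P_{n+1}$ by non-decreasing norm. The key observation is that $|P_j(\xi)|<|Q(\xi)|$ forces $q+\log|P_j(\xi)|<L_1(q)\leq L(P_j,q)$ (the latter since $L_1(q)$ is the first minimum), so that $L(P_j,q)=\log\|P_j\|$ for every $j\geq 2$. After reordering, the non-decreasing rearrangement of $\{L(P_j,q):1\leq j\leq n+1\}$ coincides with $\{L_1(q)\}\cup\{\log\|P_j\|:j\geq 2\}$; the minimax characterization of successive minima then yields $\log\|P_i\|\geq L_i(q)$ for $i\geq 2$, and a parallel pigeonhole argument applied to the inequalities $\log\|P_j\|\leq L_j(q)+\log 2$ preserves the upper bound $\|P_i\|\leq 2e^{L_i(q)}$ after sorting. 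Condition (ii) then follows from $\|P_1\|\leq e^{L_1(q)}\leq e^{L_2(q)}\leq \|P_2\|$, and (iii) is an immediate consequence of $|P_1(\xi)|\prod_{i=2}^{n+1}\|P_i\|\asymp e^{-q+\sum_i L_i(q)}$ combined with Minkowski's second theorem $L_1(q)+\cdots+L_{n+1}(q)=q+\GrO(1)$.
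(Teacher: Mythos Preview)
Your proposal is correct and follows essentially the same approach as the paper: take polynomials realizing the successive minima of $\cC_\xi(q)$, subtract an integer multiple of $Q$ from each to force $|P_i(\xi)|<|Q(\xi)|$, reorder by norm, and use the identity $L(P_i,q)=\log\|P_i\|$ together with linear independence to pin down $\log\|P_i\|\in[L_i(q),L_i(q)+\log 2]$. One small difference worth noting: for (iii) you argue directly from the sandwich $e^{L_i(q)}\leq\|P_i\|\leq 2e^{L_i(q)}$ and Minkowski's second theorem, whereas the paper invokes the determinant estimate (Lemma~\ref{Lem: estimation det facile}) for the lower bound; your route is slightly more economical here.
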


\begin{proof}
    Let $Q_1=Q,Q_2,\dots,Q_{n+1} \in\bZ[X]_{\leq n}$ be linearly independent polynomials which realize $L_1(q),\cdots,L_{n+1}(q)$, \ie such that
    \[
        L(Q_i,q) = L_i(q) \qquad(i=1,\dots,n+1).
    \]
    By hypothesis on $Q=Q_1$, we have $q\geq q_1$, where $q_1 = \log \norm{Q_1} - \log |Q_1(\xi)|$ is the abscissa where $L(Q_1,\cdot)$ changes slope. We obtain
    \begin{align*}
        L_1(q) = L(Q_1,q) = \log \norm{Q_1} + q - q_1 \AND \log |Q_1(\xi)| = L_1(q) -q.
    \end{align*}
    Then, Minkowski's second theorem yields
    \begin{align}
        \label{eq proof: lemme 3.3 Minkowski}
        |Q_1(\xi)|e^{L_2(q)+\cdots + L_{n+1}(q)} = e^{-q+L_1(q)+\cdots + L_{n+1}(q)} \ll 1.
    \end{align}
    Set $P_1 = Q_1$, and for $i=2,\dots,n+1$ set
    \begin{align*}
        R_i = Q_i - \left\lfloor \frac{Q_i(\xi)}{P_1(\xi)} \right\rfloor P_1 \in \bZ[X]_{\leq n}.
    \end{align*}
    We have $|R_i(\xi)| < |P_1(\xi)|$, as well as
    \begin{align*}
        \norm{R_i} \leq 2 \max\left\{\norm{Q_i}, \frac{|Q_i(\xi)|}{|P_1(\xi)|} \cdot \norm{P_1} \right\} = 2e^{L(Q_i,q_1)} \leq 2e^{L(Q_i,q)} =  2e^{L_i(q)}.
    \end{align*}
    Denote by $P_2,\dots,P_{n+1}$ the polynomials $R_2,\dots,R_{n+1}$ reordered by increasing norm. By the above, for $i=2,\dots,n+1$, we have
    \begin{align}
        \label{eq proof: lemme 3.3 eq 2}
        |P_i(\xi)| < |P_1(\xi)| \AND \norm{P_i} \leq 2e^{L_i(q)}.
    \end{align}
    On the other hand, since $\log |P_i(\xi)|+q < \log |P_1(\xi)| +q = L_1(q) \leq L(P_i,q)$ (the last inequality coming from the minimality property of~\eqref{eq: def L_1 as min...}), we must have $L(P_i,q) = \log \norm{P_i}$, thus
    \begin{align*}
        \log \norm{P_1} \leq L(P_1,q) = L_1(q) \leq L(P_i,q) =\log \norm{P_i}.
    \end{align*}
    So, we have
    \begin{align*}
        L(P_1,q) \leq L(P_2,q) = \log \norm{P_2} \leq \cdots \leq \log \norm{P_{n+1}} = L(P_{n+1},q).
    \end{align*}
    Since the polynomials $P_1,P_2,\dots,P_{n+1} \in\bZ[X]_{\leq n}$ are linearly independent, we deduce that
    \begin{align}
        \label{eq proof: lemme 3.3 eq 3}
        L_i(q) \leq L(P_i,q) = \log \norm{P_i} \qquad \textrm{for } i=2,\dots,n+1.
    \end{align}
    So the conditions~\ref{item: lem base construction 1} and~\ref{item: lem base construction 2} are fulfilled. Finally, Lemma~\ref{Lem: estimation det facile} together with \eqref{eq proof: lemme 3.3 eq 2} and~\eqref{eq proof: lemme 3.3 Minkowski} yields
    \begin{align*}
        1 \ll \prod_{i=1}^{n+1}\norm{P_i}\sum_{i=1}^{n+1}\frac{|P_i(\xi)|}{\norm{P_i}} \ll |P_1(\xi)|\prod_{i=2}^{n+1}\norm{P_i} \ll |P_1(\xi)|e^{L_2(q)+\dots+L_{n+1}(q)} \ll 1.
    \end{align*}
    Note that \eqref{eq proof: lemme 3.3 eq 3} together with \eqref{eq proof: lemme 3.3 eq 2} show that $L_i(q) \leq L(P_i,q) \leq L_i(q)+\log 2$ for $i=1,\dots,n+1$, while $L_1(q) = L(P_1,q)$. Thus, roughly speaking, the polynomials $P_i$ realize the successive minima of $\cC_\xi(q)$ up to a factor $\leq 2$.

\end{proof}

\section{Families of polynomials}
\label{section : construction points à considérer}

Let $\xi$ be a transcendental real number and $n$ be an integer $\geq 2$. In this section, we suppose that
\begin{align*}
    \omega_n(\xi) < \infty
\end{align*}
(see Remark~\ref{Rem: section notation, exposants non extremal}). In this section, we start to relate our polynomial constructions to the exponents $\omega_n(\xi)$ and $\homega_n(\xi)$. Fix a small $\ee \in (0,1)$ and set
\begin{align}
    \label{eq def : homega et omega fct de epsilon}
    \homega = \homega(\ee) = \homega_n(\xi)-\frac{\ee}{2} \AND \omega = \omega(\ee) = \omega_n(\xi)- \frac{\ee}{2}.
\end{align}
It follows from the definition of $\omega_n(\xi)$ and $\homega_n(\xi)$ that there exists $H_0\geq 1$ such that for each $H > H_0$, the system
\begin{equation}
    \label{eq : système pol}
    \norm{Q} \leq H \AND |Q(\xi)| \leq H^{-\homega}
\end{equation}
has a non-zero solution $Q\in\bZ[X]_{\leq n}$, and that any such $Q$ satisfies
\begin{align}
    \label{eq: minoration P(xi) sol}
    |Q(\xi)|\geq \norm{Q}^{-\omega_n(\xi)-\ee/2}
\end{align}
(because when $H$ goes to infinity, the quantity $|Q(\xi)|$ tends to $0$, and thus $\norm{Q}$ also goes to infinity). Define
\[
    \PP(\ee) = \left\{P\in\bZ[X]_{\leq n} \textrm{ irreducible } \,;\, e^{-n}\norm{P}\geq H_0 \AND |P(\xi)| \leq \norm{P}^{-\omega} \right\}.
\]
Note that any element of $\PP(\ee)$ has norm at least $e^nH_0 > 1$. A classical argument of Wirsing \cite[Hilfssatz 4]{wirsing1961approximation} ensures that the set $\PP(\ee)$ is infinite (see also \cite[Section 6]{davenport1969approximation}).
Now, write $\PP(\ee)$ as a disjoint union
\begin{align*}
  \PP(\ee) = \PP_0(\ee) \bigsqcup \PP_1(\ee),
\end{align*}
where
\begin{align*}
    \PP_0(\ee) = \left\{ P\in \PP(\ee) \,;\, \log\big(e^{-n} \norm{P} \big) < L_1(q_P) \right\} \AND \PP_1(\ee) = \PP(\ee)\setminus \PP_0(\ee),
\end{align*}
and $q_P = \log \norm{P} - \log |P(\xi)|$ as in Section~\ref{section : parametric geometry of numbers}. The set $\PP_0(\ee)$ is the set of polynomials $P\in\PP(\ee)$ which almost realize $L_1$ at $q=q_P$, since $L(P,q_P) < L_1(q_P)+n$. There is however no guarantee that this set is infinite.

\begin{Lem}
    \label{lem: estimations de base pour Wirsing}
    Let $\ee \in (0,1)$ and let $P\in \PP(\ee)$. There are linearly independent polynomials $Q_1,\dots,Q_{n+1}\in\bZ[X]_{\leq n}$     satisfying the following properties. Write $H_i = \norm{Q_i}$ for $i=1,\dots,n+1$.
    \begin{enumerate}
        \item The polynomials $Q_1$ and $Q_2$ are coprime and $Q_2 = P$.
        \item We have $H_1 \leq  \dots \leq H_{n+1}$, and there exists $x\geq n$ such that $H_2\cdots H_{n+1} = H_2^x$.
        \item \label{item:lem: estimations de base pour Wirsing:item 3} If $P\in \PP_0(\ee)$ and $\norm{P}$ is large enough, then $x \in [\homega_n(\xi)-\ee,\omega_n(\xi)+\ee]$ and
            \begin{align}
                \label{eq lem estimations de base pour Wirsing}
                \max\left\{|Q_1(\xi)|,\dots,|Q_{n+1}(\xi)|\right\} \ll  H_2^{-x}.
            \end{align}
        \item \label{item:lem: estimations de base pour Wirsing:item 4} If $P\in \PP_1(\ee)$ and $\norm{P}$ is large enough, then $x \in [\omega_n(\xi)-\ee,\omega_n(\xi)+\ee]$ and
            \begin{align}
                \label{eq lem estimations de base pour Wirsing bis}
                \max\left\{|Q_1(\xi)|,\dots,|Q_{n+1}(\xi)|\right\} \ll  H_2^{-x+\ee}.
            \end{align}
    \end{enumerate}
    The implicit constants depend on $n$ and $\xi$ only.
\end{Lem}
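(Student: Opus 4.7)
The strategy is to combine $P$ with a basis of $n+1$ linearly independent integer polynomials produced by Lemma~\ref{Lem base construction} at a parameter $q^*$ adapted to the case: take $q^* = q_P$ if $P \in \PP_0(\ee)$, and choose $q^* \geq q_P$ with $L_1(q^*) = \log\norm{P} + \GrO(1)$ if $P \in \PP_1(\ee)$ (possible by the piecewise linearity of $L_1$ with slopes $0$ and $1$, and by $L_1(q_P) + n \leq \log\norm{P}$). In either case $\log\norm{P}$ and $L_1(q^*)$ agree up to a bounded additive constant, placing the norm of $P$ in the first-minimum range of $\cC_\xi(q^*)$.

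Pick an auxiliary polynomial $R \in \bZ[X]_{\leq n}$ realizing $L_1(q^*)$ with slope-$1$ trajectory on $[q^*, \infty)$ (existing by the continuity and piecewise-linear structure of the $L_i$'s) and apply Lemma~\ref{Lem base construction} to obtain linearly independent $P_1 = R, P_2, \ldots, P_{n+1}$ satisfying $\norm{P_i} \asymp e^{L_i(q^*)}$, $|P_1(\xi)| \cdot \norm{P_2}\cdots\norm{P_{n+1}} \asymp 1$, and $|P_i(\xi)| < |P_1(\xi)|$ for $i \geq 2$. To insert $P$ at position $2$, expand $P = \sum_i c_i P_i$ in this basis and replace some $P_{i_0}$ with $c_{i_0} \neq 0$ by $P$; the resulting collection is still a basis of $\bR[X]_{\leq n}$, and I relabel it in increasing order of norm so that $Q_2 = P$. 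The index $i_0$ is chosen so that the resulting $Q_1$ is coprime to the irreducible polynomial $P$; should the natural choice fail, $P \mid Q_1$ combined with $\norm{Q_1} \leq \norm{P}$ and Gelfond's inequality~\eqref{eq: Gelfond P facteur de Q} forces $Q_1 = c P$ with $c$ a bounded integer, which can be broken by an additional swap with another $P_j$.

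For the estimates, the identity $|P_1(\xi)|\cdot\norm{P_2}\cdots\norm{P_{n+1}} \asymp 1$ yields, after the swap, $H_2\cdots H_{n+1} \asymp (\norm{P_{i_0}}/\norm{P})\cdot|P_1(\xi)|^{-1}$, hence $H_2^{-x} \asymp (\norm{P}/\norm{P_{i_0}})\cdot|P_1(\xi)|$; since $\norm{P_{i_0}} \geq \norm{P_1} \asymp \norm{P}$, the factor $\norm{P}/\norm{P_{i_0}}$ is $\ll 1$, and the value bound $\max|Q_i(\xi)| \leq \max\{|P(\xi)|,|P_1(\xi)|\} \ll |P_1(\xi)| \ll H_2^{-x}$ follows in case (iii), with an extra $e^\ee$ factor in case (iv) coming from the use of Lemma~\ref{Lem: estimation Q(xi) qui réalise L_1} at the point $q^* > q_P$. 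For the range of $x$, Minkowski's second theorem $\sum_{i=1}^{n+1} L_i(q^*) = q^* + \GrO(1)$ combined with $\norm{P_i} \asymp e^{L_i(q^*)}$ and $\log\norm{P} = L_1(q^*) + \GrO(1)$ gives $\log(H_2\cdots H_{n+1}) = q^* - L_{i_0}(q^*) + \GrO(1)$, so $x = (q^* - L_{i_0}(q^*))/\log\norm{P} + o(1)$; together with the bound $q^*/\log\norm{P} = 1 + (-\log|P(\xi)|)/\log\norm{P}$ placed in $[1+\omega_n(\xi)-\ee/2,\,1+\omega_n(\xi)+\ee/2]$ via \eqref{eq: minoration P(xi) sol} and the definition of $\PP(\ee)$ (modulo a shift in case $\PP_1$), and with the trivial bound $L_1(q^*) \leq L_{i_0}(q^*)$, this yields $x \leq \omega_n(\xi) + \ee$ in both cases. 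The lower bound $x \geq n$ is automatic from $H_i \geq H_2$ for $i \geq 3$, while the case-specific lower bounds $\homega_n(\xi) - \ee$ (in (iii)) and $\omega_n(\xi) - \ee$ (in (iv)) follow from the sharper control on $L_{i_0}(q^*)$ available in each regime, using the parametric exponents $\pu = 1/(1+\omega_n(\xi))$ and $\po = 1/(1+\homega_n(\xi))$.

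The main obstacle will be verifying the coprimality of $Q_1$ with $P$ while simultaneously maintaining $\norm{Q_1} \leq \norm{P}$ and producing the precise bounds on $L_{i_0}(q^*)$ needed to pin down the range of $x$; this will likely require a careful case analysis depending on how the $P_i$'s decompose under division by $P$ and on whether $q_P$ lies on a slope-$0$ or a slope-$1$ piece of $L_1$.
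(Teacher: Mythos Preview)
Your outline has the right architecture---apply Lemma~\ref{Lem base construction} at a well-chosen parameter and splice $P$ into the resulting basis---but two steps do not go through as written. First, coprimality: the claim that $P\mid Q_1$ together with $\norm{Q_1}\le\norm{P}$ forces $Q_1=cP$ via Gelfond is false when $\deg P<n$, since \eqref{eq: Gelfond P facteur de Q} only yields $e^{-n}\norm{P}<\norm{Q_1}$, which is perfectly compatible with $Q_1$ being $P$ times a non-constant factor of small height. Moreover, taking $q^*=q_P$ in case $\PP_0(\ee)$ gives only $L_1(q_P)\le\log\norm{P}$, and when equality holds your $R$ may be $P$ itself. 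The paper avoids both problems by choosing, in \emph{both} cases, the point $q$ maximal with $L_1(q)=\log(e^{-n}\norm{P})$, so that $\norm{Q}\le e^{-n}\norm{P}$ and Gelfond yields coprimality at once. It then inserts $P$ not by swapping out some $P_{i_0}$ but by replacing each $P_i$ ($i\ge2$) with $\tP_i=P_i+\lambda_iP$, $\lambda_i\in\{0,3\}$, forcing $\norm{\tP_i}>\norm{P}$ while $\norm{\tP_i}\asymp\norm{P_i}$, and only afterwards discarding one $\tP_j$. This guarantees $(Q_1,Q_2)=(Q,P)$ after sorting, which your relabelling does not.

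Second, the lower bound on $x$: your expression $x=(q^*-L_{i_0}(q^*))/\log\norm{P}+o(1)$ requires an \emph{upper} bound on $L_{i_0}(q^*)$, but $i_0$ may be any index up to $n+1$, and the parametric exponents $\pu,\po$ control $L_1$, not $L_{i_0}$. The paper obtains these lower bounds by a different route: it applies Lemma~\ref{Lem: estimation det facile} to the \emph{final} family $(Q_1,\dots,Q_{n+1})$, obtaining $1\ll(\max_i|Q_i(\xi)|)\cdot H_2^{\,x}$ (using $H_1\le H_2$), and then feeds in the upper bounds on $\max_i|Q_i(\xi)|$ already established---namely $\ll|Q(\xi)|\ll\norm{P}^{-\homega}$ in case~\ref{item:lem: estimations de base pour Wirsing:item 3} via Lemma~\ref{Lem: estimation Q(xi) qui réalise L_1}, and $\ll|P(\xi)|\le\norm{P}^{-\omega}$ in case~\ref{item:lem: estimations de base pour Wirsing:item 4} from the definition of $\PP(\ee)$.
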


\begin{proof}
    Recall from \eqref{eq def : homega et omega fct de epsilon} that $\homega=\homega(\ee)$ and $\omega=\omega(\ee)$. Let $P\in \PP(\ee)$ and let $q \geq 0$ be maximal such that $L_1(q) = \log(e^{-n} \norm{P})$. The point $q$ tends to infinity as $\norm{P}$ goes to infinity. Let $Q\in \bZ[X]_{\leq n}$ be such that
    \begin{align*}
        L(Q,q) = L_1(q).
    \end{align*}
    By maximality of $q$, there exists $\eta > 0$ such that $L_1$ has slope $1$ on $[q,q+\eta]$. Since $L_1 \leq L(Q,\cdot)$, the function $L(Q,\cdot)$ has slope $1$ on $[q,\infty)$. Therefore
    \begin{align}
        \label{eq proof: lem construction cambouis 1}
        \log\norm{Q} \leq L(Q,q) = \log |Q(\xi)| + q = L_1(q) = \log\big(e^{-n}\norm{P}\big).
    \end{align}
    Thus $\norm{Q} \leq e^{-n} \norm{P}$, and, by \eqref{eq: Gelfond P facteur de Q}, the irreducible polynomial $P$ cannot be a factor of $Q$. They are therefore coprime. Moreover, Lemma~\ref{Lem: estimation Q(xi) qui réalise L_1} implies that if $q$ (or equivalently $\norm{P}$) is large enough, then $Q$ is solution of~\eqref{eq : système pol}, namely
    \begin{equation*}
        \norm{Q} \leq H \AND |Q(\xi)| \leq H^{-\homega},
    \end{equation*}
    with $H=e^{-n}\norm{P} \geq H_0$. Combined with \eqref{eq: minoration P(xi) sol}, this gives
    \begin{align}
        \label{eq proof: lem construction cambouis 3}
        \norm{Q}^{-\omega_n(\xi)-\ee/2} \leq |Q(\xi)| \leq \big(e^{-n}\norm{P}\big)^{-\homega} \ll \norm{P}^{-\homega}.
    \end{align}
    On the other hand, there exist $P_1,\dots, P_{n+1}$ in $\bZ[X]_{\leq n}$, with $P_1=Q$ satisfying assertions~\ref{item: lem base construction 1}--\ref{item: lem base construction 3} of Lemma~\ref{Lem base construction}. In particular, for $i=2,\dots,n+1$, we have
    \[
        \norm{P_i} \geq e^{L_1(q)} = e^{-n}\norm{P} \AND |P_i(\xi)| < |Q(\xi)|.
    \]
    For these indices $i$, set $\tP_i = P_i+\lambda_iP$ with $\lambda_i = 0$ if $\norm{P_i} > \norm{P}$, and $\lambda_i = 3$ otherwise, so that
    \[
        \norm{P_i} \asymp \norm{\tP_i} > \norm{P} \AND |\tP_i(\xi)| \leq 4\max\left\{ |Q(\xi)| , |P(\xi)|\right\},
    \]
    as well as
    \begin{align}
        \label{eq proof: lem construction cambouis 2}
        |P_1(\xi)| \prod_{i=2}^{n+1} \norm{\tP_i} \asymp |P_1(\xi)| \prod_{i=2}^{n+1} \norm{P_i} \asymp 1.
    \end{align}
    Since the family $P,P_1,\tP_2,\dots,\tP_{n+1}$ spans $\bR[X]_{\leq n}$ and since $P$ and $Q=P_1$ are linearly independent, there exists an index $j\in\{2,\dots,n+1\}$ such that $P,P_1,\tP_2,\dots,\widehat{\tP_j},\dots,\tP_{n+1}$ are linearly independent (where $\tP_j$ is omitted from the list). We denote by $Q_1,\dots,Q_{n+1}$ this family reordered by increasing norm. By construction of the polynomials $\tP_k$, we have $(Q_1,Q_2) = (Q,P)$. Let $x\in\bR$ be such that
    \[
        \norm{Q_2}\cdots \norm{Q_{n+1}} = \norm{Q_2}^x.
    \]
    The inequalities $\norm{Q_2} \leq \cdots \leq \norm{Q_{n+1}}$ imply that $x\geq n$. The two first assertions of the lemma are thus satisfied. We also have
    \begin{align}
        \label{eq proof: lem construction cambouis 2 bis}
        \max\left\{|Q_1(\xi)|,\dots,|Q_{n+1}(\xi)|\right\} \ll \max\{|Q(\xi)|, |P(\xi)|\}.
    \end{align}
    Since $\norm{P} \leq \norm{\tP_j}$, we deduce from \eqref{eq proof: lem construction cambouis 2} that
    \begin{align*}
        1 \asymp |P_1(\xi)| \prod_{i=2 }^{n+1} \norm{\tP_i} \geq |Q(\xi)|\prod_{i=2 }^{n+1} \norm{Q_i} = |Q(\xi)| \cdot \norm{P}^{x}.
    \end{align*}
    Therefore
    \begin{align}
        \label{eq proof: lem construction cambouis 5}
        |Q(\xi)| \ll \norm{P}^{-x}.
    \end{align}
    According to the first inequality of \eqref{eq proof: lem construction cambouis 3} (and since $\norm{Q} < \norm{P}$), we have
    \begin{align}
        \label{eq proof: lem construction cambouis 2 ter}
        \norm{P}^x \ll \norm{P}^{\omega_n(\xi)+\ee/2}.
    \end{align}
    Consequently, as soon as $\norm{P}$ is large enough, we have $x\in [n, \omega_n(\xi)+\ee]$. It remains to prove the last two assertions of our Lemma.

    \medskip

    \noindent\textbf{Proof of assertion \ref{item:lem: estimations de base pour Wirsing:item 3}.}
    Assume that $P\in\PP_0(\ee)$. If $P\in\PP_0(\ee)$, then $L_1(q_P) >  \log(e^{-n} \norm{P})$ and we find
     \[
        q < q_P = \log H(P)-\log |P(\xi)|.
    \]
    It follows that
    \begin{align*}
        \log\big(e^{-n} \norm{P}\big) = L_1(q) = \log |Q(\xi)| + q \leq \log |Q(\xi)| +  \log \norm{P}-\log |P(\xi)|,
    \end{align*}
    which implies $e^{-n}|P(\xi)| \leq |Q(\xi)|$, and so \eqref{eq proof: lem construction cambouis 2 bis} becomes
    \begin{align*}
        \max\left\{|Q_1(\xi)|,\dots,|Q_{n+1}(\xi)|\right\} \ll |Q(\xi)| \ll \norm{P}^{-x},
    \end{align*}
    hence \eqref{eq lem estimations de base pour Wirsing}. On the other hand, Lemma~\ref{Lem: estimation det facile} applied to the family $(Q_1,\dots,Q_{n+1})$ ensures that
    \begin{align*}
        1 \ll |Q(\xi)|\prod_{i=2 }^{n+1}\norm{Q_i} = |Q(\xi)|\cdot\norm{P}^x.
    \end{align*}
    Combined with \eqref{eq proof: lem construction cambouis 5}, this yields $|Q(\xi)| \asymp \norm{P}^{-x}$. Then~\eqref{eq proof: lem construction cambouis 3} gives $\norm{P}^{\homega} \ll \norm{P}^x$, and we conclude that $x \geq \homega_n(\xi)-\ee$ as soon as $\norm{P}$ is large enough.

    \medskip

    \noindent\textbf{Proof of assertion \ref{item:lem: estimations de base pour Wirsing:item 4}.} Assume that $P\in\PP_1(\ee)$. We now have $L_1(q_P) \leq  \log(e^{-n}\norm{P})$, and thus $q\geq q_P$. Combined with
    \begin{align*}
        e^{L(Q,q)} = e^{L_1(q)} = e^{-n}H(P) < H(P)
    \end{align*}
    this implies $|Q(\xi)| = e^{L_1(q)-q} < H(P)e^{- q_P} = |P(\xi)| \leq \norm{P}^{-\omega}$, and \eqref{eq proof: lem construction cambouis 2 bis} becomes
    \begin{align}
        \label{eq proof: lem construction cambouis 6}
        \max\left\{|Q_1(\xi)|,\dots,|Q_{n+1}(\xi)|\right\} \ll |P(\xi)| \leq \norm{P}^{-\omega}.
    \end{align}
    Together with \eqref{eq proof: lem construction cambouis 2 ter} it yields \eqref{eq lem estimations de base pour Wirsing bis}.    Finally, Lemma~\ref{Lem: estimation det facile} yields
    \begin{align*}
        1 \ll |P(\xi)|\prod_{i=2 }^{n+1}\norm{Q_i} \leq \norm{P}^{x-\omega},
    \end{align*}
    and so $x \geq \omega-\ee/2 = \omega_n(\xi)-\ee$ as soon as $\norm{P}$ is large enough.
\end{proof}

\section{Generalized resultants}
\label{Section : semi-resultants}

Let $\xi$ be a transcendental real number and $n$ be an integer $\geq 2$.
The main result of the section is the following one, which, combined with Lemma~\ref{lem: estimations de base pour Wirsing}, will allow us to construct algebraic numbers of degree at most $n$ very close to $\xi$.

\begin{Prop}
    \label{lem: premiere estimation brute pour xi-alpha}
    Let $k$ be an integer with $2\leq k \leq n+1$ and set $N= 2n-k+1 \geq n$. Let $P_1,\dots,P_k \in \bZ[X]_{\leq n}$ be linearly independent polynomials, and write $H_i = \norm{P_i}$ for $i=1,\dots,k$. Suppose that $P_1$ and $P_2$ are coprime, and that
    \[
        H_1 \leq \cdots \leq H_k \AND \max_{1\leq i\leq k}|P_i(\xi)| \leq \delta,
    \]
    for some $\delta > 0$. Then, there exists an algebraic number $\alpha$ of degree $\leq n$ and an index $m\in\{1,\dots,k\}$ such that
    \begin{align}
        \label{eq: premiere estimation brute pour xi-alpha}
        H(\alpha)\ll H_m \AND |\xi-\alpha| \ll \delta^2  H_1^{n-k+1}H_2^{n-k+2}H_3\cdots H_k H_m^{-1},
    \end{align}
    where the implicit constants depend on $n$ and $\xi$ only.
\end{Prop}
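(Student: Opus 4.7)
My approach is to generalize Wirsing's classical resultant argument (which handles the case $k=2$ via the Sylvester resultant of two coprime polynomials) to a generalized resultant built from all $k$ polynomials simultaneously.

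I would first form the $(N+1)\times(N+1)$ integer matrix $M$ whose columns are the coefficient vectors of $P_1, XP_1, \ldots, X^{n-k+1}P_1$ (the $P_1$-block, $n-k+2$ columns), then $P_2, XP_2, \ldots, X^{n-k+1}P_2$ ($n-k+2$ columns), and finally $P_3,\ldots,P_k$ ($k-2$ columns), giving $2(n-k+2)+(k-2)=N+1$ columns in total. Setting $R:=\det M\in\bZ$, the non-vanishing $R\neq 0$ follows from the coprimality of $P_1,P_2$ combined with the linear independence of $P_1,\ldots,P_k$: any hypothetical relation $U_1P_1+U_2P_2+\sum_{i\geq 3}c_iP_i=0$ with $\deg U_j\leq n-k+1$, reduced modulo $P_1$, yields $U_2P_2\equiv-\sum c_iP_i\pmod{P_1}$, and invoking coprimality (to invert the image of $P_2$ in $\bR[X]/P_1$) together with the linear independence forces triviality.

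Next, I would bound $|R|$ from above. Changing the row basis from $\{X^j\}$ to $\{(X-\xi)^j\}$ via a unit-triangular transformation (preserving $|\det|$), followed by a further unit-triangular column operation that replaces $(S+\xi I)^a\mathbf{P}_i$ by $S^a\mathbf{P}_i$ (where $\mathbf{P}_i=(P_i^{[0]}(\xi),\ldots,P_i^{[n]}(\xi),0,\ldots,0)^T$ and $S$ denotes the downward shift), I obtain a matrix $M'$ whose entries are shifted copies of the Taylor coefficients $P_i^{[l]}(\xi)$. In $M'$, the ``small'' entries $P_i(\xi)=P_i^{[0]}(\xi)\ll\delta$ appear only on the principal diagonals of the $P_1$- and $P_2$-blocks and in the top row of each $P_i$-column for $i\geq 3$; every other entry is bounded by $\ll H_i$.

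The main technical step is to perform a Laplace expansion of $\det M'$ that isolates the product of two small entries (one from each of the $P_1$- and $P_2$-blocks), yielding a $\delta^2$ factor. Bounding the complementary $(N-1)\times(N-1)$ minor via Hadamard's inequality and incorporating a Taylor-type refinement in which one of the height factors is replaced by $H_m|\xi-\alpha|$ (for $\alpha$ the closest root of some $P_m$ to $\xi$, via the expansion $P_m^{[l]}(\xi)=P_m^{[l]}(\alpha)+(\xi-\alpha)\cdot O(H_m)$ and the vanishing $P_m(\alpha)=0$), one arrives at an upper bound of the form
\[
1 \leq |R| \ll \frac{\delta^2\,H_1^{n-k+1}H_2^{n-k+2}H_3\cdots H_k}{H_m\,|\xi-\alpha|}.
\]
Rearranging gives the announced estimate $|\xi-\alpha|\ll\delta^2 H_1^{n-k+1}H_2^{n-k+2}H_3\cdots H_k\,H_m^{-1}$. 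Since $\alpha$ is a root of $P_m$ (of degree $\leq n$), it has degree $\leq n$, and Gelfond's lemma \eqref{eq: Gelfond P facteur de Q} applied to the irreducible factor of $P_m$ containing $\alpha$ gives $H(\alpha)\ll H_m$. The hardest part is the Laplace expansion step: making precise the combinatorial choice of rows and columns so as to extract exactly the $\delta^2$ factor along with the displacement factor $H_m|\xi-\alpha|$, rather than a suboptimal bound with only one $\delta$ or additional spurious powers of heights.
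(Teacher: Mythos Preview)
Your overall strategy is right, but there is a genuine gap at the very first step: the determinant $R=\det M$ need not be nonzero. Your argument for $R\neq 0$ is incomplete. From a relation $U_1P_1+U_2P_2+\sum_{i\ge 3}c_iP_i=0$, reducing modulo $P_1$ and inverting $P_2$ only tells you that $U_2\equiv -P_2^{-1}\sum c_iP_i\pmod{P_1}$; it does not force the $c_i$ to vanish, and linear independence of $P_1,\dots,P_k$ concerns scalar relations, not relations with polynomial coefficients $U_1,U_2$. A concrete counterexample with $n=3$, $k=3$, $N=4$: take $P_1=X^2+1$, $P_2=X+1$, $P_3=X^3$. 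These lie in $\bZ[X]_{\le 3}$, are linearly independent, and $P_1,P_2$ are coprime, yet
\[
P_1+2XP_1-P_2-XP_2-2P_3=0,
\]
so your five columns $P_1,XP_1,P_2,XP_2,P_3$ are linearly dependent in $\bR[X]_{\le 4}$ and $R=0$. Even more simply, $P_1=1$, $P_2=X$, $P_3=X^2$ gives $XP_1=P_2$.

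The paper repairs this in two ways. First, it replaces $P_1,P_2$ by coprime polynomials $Q_1,Q_2$ of degree \emph{exactly} $n$ (multiplying by suitable $(X-\lambda_i)^{n-\deg P_i}$), which guarantees that the $2(n-k+2)$ shifts of $Q_1,Q_2$ are independent. Second, and more importantly, it does \emph{not} take the remaining $k-2$ columns to be $P_3,\dots,P_k$; instead it invokes Corollary~\ref{cor : complément sur les sous-espaces V_j} (which rests on the dimension Lemma~\ref{lem : complément sur les sous-espaces V_j}) to show that $V_N(Q_1,\dots,Q_k)=\bR[X]_{\le N}$, and then inductively selects $R_j\in\cB_N(Q_3,\dots,Q_j)$---i.e.\ shifts $X^aQ_i$ with $i\le j$, not just $Q_j$ itself---so that the full family $(S_0,\dots,S_N)$ is a basis. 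This dimension argument is the nontrivial input you are missing.

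A secondary issue is the extraction of the factor $\delta^2\cdot H_m^{-1}|\xi-\alpha|^{-1}$. Your proposed Laplace expansion ``isolating two small entries'' together with a simultaneous ``Taylor-type refinement'' is not made precise and does not obviously work: the small entries are not isolated in their rows or columns. The paper's route is cleaner and different from what you sketch. It keeps all small entries in row~$0$ (no column shift operation), expands to obtain
\[
1\le |\det(S_j^{[i]}(\xi))| \ll \delta\, H_1^{n-k+1}H_2^{n-k+2}H_3\cdots H_k\sum_{\ell}\frac{|S_\ell'(\xi)|}{\norm{S_\ell}},
\]
picks $\ell$ maximizing the summand, lets $\alpha$ be the root of $S_\ell$ closest to $\xi$, and only \emph{then} uses the classical estimate $|\xi-\alpha|\,|S_\ell'(\xi)|\ll |S_\ell(\xi)|\ll\delta$ to convert $|S_\ell'(\xi)|$ into the second factor $\delta/|\xi-\alpha|$. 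So only one $\delta$ comes from the determinant; the second comes afterwards from the closest-root bound.
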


To prove this result we will use generalized resultants. Let us recall the results from \cite[§6]{poels2024pol}. We say that a function  $g : \{n,n+1,n+2 \cdots\} \rightarrow \bR$ is \textsl{concave} if
\[
    g(i)-g(i-1) \geq g(i+1)-g(i)
\]
for any $i>n$. Let $N\geq n$ be an integer and let $\cA \neq \{0\}$ be a subset of $\bR[X]_{\leq n}$ containing a non-zero element. We define
\begin{align*}
    \cB_N(\cA) &= \big\{ Q, XQ,\dots, X^{N-\deg(Q)}Q \,;\, Q\in\cA\setminus\{0\} \big\} \subset \bR[X]_{\leq N}, \\
    V_N(\cA) &= \Vect[\bR]{\cB_N(\cA)}, \\
    g_\cA(N) &= \dim V_N(\cA).
\end{align*}
We call \textsl{generalized resultant} any determinant of $N+1$ elements chosen in $\cB_N(\cA)$, for some $\cA$ as above. According to \cite[Lemma~6.3]{poels2024pol}, the function $g_\cA$ is (strictly) increasing and concave on $\{n,n+1,\dots\}$. If we assume furthermore that the $\gcd$ of the elements of $\cA$ is $1$ (in other words the ideal spanned by $\cA$ is $\bR[X]$), then
\begin{align}
    \label{eq : il existe m tel que V_m = tout}
        V_{2n-1}(\cA) = \bR[X]_{\leq 2n-1}
\end{align}
(it is a direct consequence of \cite[Proposition 6.2]{poels2024pol}).

\begin{Lem}
    \label{lem : complément sur les sous-espaces V_j}
    Let $\cA$ be a linearly independent subset of $\bR[X]_{\leq n}$ of cardinality $j$ with $2\leq j\leq n+1$. We also suppose that the $\gcd$ of the elements of $\cA$ is $1$. Then, for $k=0,\dots,n-j+1$, we have
    \begin{align*}
        \dim V_{n+k}(\cA) \geq 2k + j.
    \end{align*}
\end{Lem}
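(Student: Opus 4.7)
The plan is to exploit the two global constraints that concavity puts on $g_\cA$: a lower bound $g_\cA(n) \ge j$ at one end, and the equality $g_\cA(2n-1) = 2n$ at the other. The former is immediate because $\cA \subset V_n(\cA)$ is linearly independent, and the latter is supplied by \eqref{eq : il existe m tel que V_m = tout} via the hypothesis $\gcd(\cA) = 1$. Since $g_\cA$ is strictly increasing and concave on $\{n, n+1, \dots\}$, the successive increments
\[
    a_i := g_\cA(n+i) - g_\cA(n+i-1), \qquad i \ge 1,
\]
form a non-increasing sequence of positive integers. The case $k=0$ of the lemma is then the trivial lower bound above, so we may assume $k \ge 1$.

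The core of the argument is a dichotomy on the increments $a_1, \dots, a_k$. If all of them satisfy $a_i \ge 2$, then summing gives
\[
    g_\cA(n+k) \ge g_\cA(n) + (a_1 + \cdots + a_k) \ge j + 2k,
\]
which is the desired bound. Otherwise, let $i^* \in \{1,\dots,k\}$ be the smallest index with $a_{i^*} = 1$. By monotonicity of the increments, $a_i = 1$ for every $i \ge i^*$. I can then read off $g_\cA(n+i^*-1)$ from the upper endpoint: since
\[
    g_\cA(2n-1) - g_\cA(n+i^*-1) = a_{i^*} + a_{i^*+1} + \cdots + a_{n-1} = n - i^*,
\]
I get $g_\cA(n+i^*-1) = 2n - (n - i^*) = n + i^*$, which is the maximal possible value; that is, $V_{n+i^*-1}(\cA) = \bR[X]_{\le n+i^*-1}$. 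Once $V_N(\cA)$ equals the full ambient space, all further $V_{N+1}(\cA)$ do as well (each contains $V_N(\cA)$ and $X \cdot V_N(\cA)$), so $g_\cA(n+k) = n+k+1$. The restriction $k \le n-j+1$ is exactly the inequality $n+k+1 \ge 2k+j$, and the proof closes.

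The step I expect to be the main obstacle is isolating the right way to combine the two endpoint constraints. Concavity alone is not enough: one has to observe that as soon as an increment drops to $1$, monotonicity forces the entire tail to equal $1$, and this rigidity lets the value $g_\cA(2n-1) = 2n$ pin down $g_\cA(n+i^*-1)$ precisely. The constraint $k \le n-j+1$ in the hypothesis is then seen to be sharp: it is exactly the range in which the worst case $g_\cA(n+k) = n+k+1$ still beats $2k+j$.
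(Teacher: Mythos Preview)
Your proof is correct and follows essentially the same approach as the paper: both exploit the endpoint constraints $g_\cA(n)\ge j$ and $g_\cA(2n-1)=2n$ together with the concavity and strict monotonicity of $g_\cA$, splitting according to whether the relevant increment is $\ge 2$ or $=1$. The paper frames this as a proof by contradiction with the dichotomy placed at the $k$-th increment, whereas you argue directly and locate the first drop to increment $1$; your extra step of showing that $V_{n+i^*-1}(\cA)$ is the full ambient space (and that fullness propagates) is a pleasant elaboration, but could be shortcut by computing $g_\cA(n+k)=g_\cA(2n-1)-(n-1-k)=n+k+1$ directly once all increments from $i^*$ onward equal $1$.
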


\begin{proof}
    By contradiction, suppose that there exists $k\in\{0,\dots,n-j+1\}$ such that
    \[
        g_\cA(n+k) < 2k+j.
    \]
    Since $g_\cA(n) \geq \textrm{card}(\cA) = j$, we have $k\geq 1$.

    \medskip

    \noindent\textbf{Case 1.} Suppose that $g_\cA(n+k) \geq g(n+k-1)+2$. By concavity, we have $g_\cA(i) \geq g_\cA(i-1)+2$ for $i=n+1,\dots,n+k$, and we deduce that
    \begin{align*}
        g_\cA(n+k) \geq 2k + g_\cA(n) \geq 2k+j
    \end{align*}
    which is a contradiction.

    \medskip

    \noindent\textbf{Case 2.} So  $g_\cA(n+k) \leq g(n+k-1)+1$. By concavity (and since $g_\cA$ is increasing), we have $g_\cA(i+1) = g_\cA(i)+1$ for $i=n+k,\dots,2n$. Combined with \eqref{eq : il existe m tel que V_m = tout}, we get
    \begin{align*}
        2n = g_\cA(2n-1) = g_\cA(n+k)+ 2n-1 - (n+k) & < n + k+j -1 \leq 2n
    \end{align*}
    (the last inequality coming from $k\leq n-j+1$), which is, once again, a contradiction.
\end{proof}

As a corollary, we obtain the following useful result.

\begin{Cor}
    \label{cor : complément sur les sous-espaces V_j}
    Let $k, N$ be as in Proposition~\ref{lem: premiere estimation brute pour xi-alpha}. Let $P_1,\dots,P_k \in \bZ[X]_{\leq n}$ be linearly independent polynomials such that $P_1$ and $P_2$ are coprime. Then, for each $j=2,\dots,k$ we have
    \begin{align*}
        \dim V_{N}(P_1,\dots,P_j) \geq 2(n-k+1) + j.
    \end{align*}
    In particular,
    \begin{align*}
        V_{N}(P_1,\dots,P_k) = \bR[X]_{\leq N}.
    \end{align*}
\end{Cor}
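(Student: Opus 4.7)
The corollary is essentially a repackaging of Lemma~\ref{lem : complément sur les sous-espaces V_j}, applied to the subfamily $\cA = \{P_1,\dots,P_j\}$, with the parameter in the lemma chosen so as to reach level $N=2n-k+1$. So the plan is simply: check that $\cA$ satisfies the hypotheses of that lemma, apply it with the right value of its internal parameter, then derive the ``in particular'' statement by a dimension count.

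\textbf{Verification of the hypotheses.} Fix $j\in\{2,\dots,k\}$ and set $\cA=\{P_1,\dots,P_j\}$. Since $\{P_1,\dots,P_k\}$ is linearly independent over $\bR$, so is $\cA$, and $\card(\cA)=j$ with $2\le j\le k\le n+1$. The crucial point is that $\gcd(\cA)=1$: indeed, any polynomial dividing all elements of $\cA$ divides in particular $P_1$ and $P_2$, which are coprime by assumption. Thus $\cA$ meets all the hypotheses of Lemma~\ref{lem : complément sur les sous-espaces V_j}.

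\textbf{Application of the lemma.} To reach level $N=2n-k+1$ we need to apply Lemma~\ref{lem : complément sur les sous-espaces V_j} with its internal parameter equal to $m:=n-k+1$ (I use a new letter to avoid clashing with the $k$ of the corollary). We have $m\geq 0$ since $k\leq n+1$, and $m\leq n-j+1$ since $j\leq k$, so this choice is allowed. The lemma then gives
\[
    \dim V_N(P_1,\dots,P_j) \;=\; \dim V_{n+m}(\cA) \;\ge\; 2m+j \;=\; 2(n-k+1)+j,
\]
which is exactly the first assertion.

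\textbf{The ``in particular'' statement.} Specializing to $j=k$ yields
\[
    \dim V_N(P_1,\dots,P_k) \;\geq\; 2(n-k+1)+k \;=\; 2n-k+2 \;=\; N+1.
\]
Since by definition $V_N(P_1,\dots,P_k)\subseteq \bR[X]_{\leq N}$ and the latter has dimension $N+1$, equality must hold, proving $V_N(P_1,\dots,P_k)=\bR[X]_{\leq N}$. There is no real obstacle here; the only point that required any thought was seeing how to match the parameter of Lemma~\ref{lem : complément sur les sous-espaces V_j} to the target level $N$.
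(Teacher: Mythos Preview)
Your proof is correct and is precisely the intended argument: the paper states this corollary without proof, treating it as an immediate consequence of Lemma~\ref{lem : complément sur les sous-espaces V_j}, and your derivation (checking $\gcd(\cA)=1$ via the coprimality of $P_1,P_2$, then applying the lemma with its internal parameter set to $n-k+1$) is exactly how that reduction goes.
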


\begin{proof}[\textbf{Proof of Proposition~\ref{lem: premiere estimation brute pour xi-alpha}}]
    First, note that there exist $\lambda_1, \lambda_2\in\{0,\dots,n\}$ such that the polynomials
    \[
        Q_i = (X-\lambda_i)^{n-\deg(P_i)}P_i \qquad (i=1,2)
    \]
    are coprime and of degree exactly $n$. By Gel'fond's Lemma, they also satisfy $\norm{Q_i} \asymp \norm{P_i} =  H_i$ and $|Q_i(\xi)| \asymp |P_i(\xi)| \leq \delta$ ($i=1,2$), and the space
    \[
        F = V_N(Q_1,Q_2)
    \]
    spanned by $Q_1,XQ_1,\dots, X^{n-k+1}Q_1,Q_2,XQ_2,\dots, X^{n-k+1}Q_2$ has dimension $2(n-k+2)$. We can choose a subsequence $(Q_3,\dots,Q_k)$ of $(P_1,\dots,P_k)$ such that $Q_1,\dots, Q_k$  are linearly independent. For each $j=3,\dots,k$ there is some $i\in\{1,\dots,j\}$ such that $\norm{Q_j} = H_i \leq H_j$. According to Corollary~\ref{cor : complément sur les sous-espaces V_j}, for $j=2,\dots,k$, we have
    \[
        \dim\big(F+V_{N}(Q_3,\dots, Q_j)\big)= \dim V_{N}(Q_1,\dots, Q_j) \geq \dim F + j-2.
    \]
    For $j=k$ we obtain $V_{N}(Q_1,\dots, Q_k)=\bR[X]_{\leq N}$. By recurrence, for $j=3,\dots,k$, we can choose $R_j\in \cB_{N}(Q_3,\dots,Q_j)$ such that
    \begin{align*}
        \dim\left(F + \Vect[\bR]{R_3,\dots, R_j}\right) = \dim F + j-2,
    \end{align*}
    in particular $F + \Vect[\bR]{R_3,\dots, R_k} = \bR[X]_{\leq N}$. Note that for each $j=3,\dots,k$, there is some index $i\in\{1,\dots, j\}$ such that,
    \begin{align}
        \label{eq proof: lem: premiere estimation brute pour xi-alpha}
        \norm{R_j} = H_i \leq H_j \AND |R_j(\xi)| \ll \delta.
    \end{align}
    Moreover, the roots of $R_j$ are algebraic numbers of degree at most $n$, since they are either $0$ or a root of one of the polynomials $Q_3,\dots, Q_j \in \bZ[X]_{\leq n}$. For simplicity, write
    \begin{align*}
        (S_0,\dots, S_N) = (Q_1,XQ_1,\cdots, X^{n-k+1}Q_1,Q_2,XQ_2,\cdots, X^{n-k+1}Q_2, R_3,\dots,R_{k}).
    \end{align*}
    The first $n-k+2$ polynomials $S_i$ have norm $\asymp H_1$, while the following $n-k+2$ ones have norm~$\asymp H_2$. We control the norms of the last $k-2$ polynomials $S_i$, which are equal to the polynomials $R_j$, by using \eqref{eq proof: lem: premiere estimation brute pour xi-alpha}. The non-zero generalized resultant $\det(S_0,\dots,S_{N})$ satisfies
    \begin{align*}
        1 \leq \left|\det(S_0,\dots,S_{N})\right| = \left|\det\left( S^{[i]}_j(0) \right)_{0\leq i,j\leq N}\right|
        = \left|\det\left( S^{[i]}_j(\xi) \right)_{0\leq i,j\leq N}\right|.
    \end{align*}
    For $j=0,\dots, N$, we have
    \[
        |S^{[0]}_j(\xi)| = |S_j(\xi)| \ll \delta \AND S^{[1]}_j(\xi) = S_j'(\xi).
    \]
    For $i=2,\dots, N$ we will use the crude estimate $|S^{[i]}_j(\xi)| \ll \norm{S_j}$. Expanding the last determinant, we obtain
    \begin{align}
    \label{eq proof estimation |xi-alpha| via gros det}
        1 \leq  \left|\det\left( S^{[i]}_j(\xi) \right)_{0\leq i,j\leq N}\right| & \ll \delta H_1^{n-k+1}H_2^{n-k+2} H_3 \cdots H_k \sum_{\ell=0}^{N} \frac{|S_\ell'(\xi)|}{\norm{S_\ell}}.
    \end{align}
    Let $\ell\in\{0,\dots, N\}$ be such that $|S_\ell'(\xi)|/\norm{S_\ell}^{-1}$ is maximal, and let $\alpha$ be a root of $S_\ell$ such that $|\xi-\alpha|$ is minimal. Recall that $\alpha$ is algebraic of degree at most $n$ and that there exists $m\in\{1,\dots,k\}$ such that $\norm{S_\ell} \asymp H_m$. Then, the minimal polynomial of $\alpha$ divides $S_\ell$, and Gel'fond's lemma yields $H(\alpha) \ll \norm{S_\ell}  \ll H_m$. On the other hand
    \begin{align*}
        |\xi-\alpha||S_\ell'(\xi)| \ll |S_\ell(\xi)| \ll \delta
    \end{align*}
    (it is a classical argument, see for example \cite[Section~2]{bugeaud2007exponents}). Multiplying both sides of \eqref{eq proof estimation |xi-alpha| via gros det} by $|\xi-\alpha|$, this yields \eqref{eq: premiere estimation brute pour xi-alpha}.
\end{proof}

Proposition~\ref{lem: premiere estimation brute pour xi-alpha} has the following Corollary.

\begin{Cor}
    \label{Cor: theorem vers fonction F via semi-resultant}
    Let $k$ be an integer with $2\leq k \leq n+1$ and let $C, y > 0$. Let $P_1,\dots,P_k \in \bZ[X]_{\leq n}$ be linearly independent polynomials and write $H_i = \norm{P_i}$ for $i=1,\dots,k$. Assume that
    \begin{enumerate}
      \item $P_1$ and $P_2$ are coprime and $H_2 \geq 2$;
      \item $H_1\leq \cdots \leq H_k$; 
      \item $|P_i(\xi)| \leq C H_2^{-y}$ for $i=1,\dots,k$.
    \end{enumerate}
    For $i=1,\dots,k$ write $H_i = H_2^{a_i}$, and suppose furthermore that
    \[
        A_k := 2y-2(n+1-k)-a_2-\cdots - a_k \geq 0.
    \]
    Then, there exists an algebraic number $\alpha$ of degree $\leq n$ and a constant $c$ which depends on $n$, $\xi$ only, such that
    \begin{align}
        \label{eq thm : exposant A a la base de fonction F}
        |\xi-\alpha| \ll C^2\min\left\{\big(cH(\alpha)\big)^{-A_k/a_k-1}, H_2^{-A_k-1}\right\}.
    \end{align}
    The implicit constant depends on $n$ and $\xi$ only.
\end{Cor}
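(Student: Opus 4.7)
The plan is to apply Proposition~\ref{lem: premiere estimation brute pour xi-alpha} with $\delta = CH_2^{-y}$, which by hypothesis bounds $\max_i|P_i(\xi)|$. This yields an algebraic number $\alpha$ of degree at most $n$ and an index $m\in\{1,\dots,k\}$ with $H(\alpha) \ll H_m$ and
\begin{align*}
    |\xi - \alpha| \ll C^2 H_2^{-2y}\,H_1^{n-k+1} H_2^{n-k+2} H_3\cdots H_k\,H_m^{-1}.
\end{align*}
Writing $H_i = H_2^{a_i}$ (so $a_1 \leq a_2 = 1 \leq a_3 \leq \cdots \leq a_k$) and invoking the definition of $A_k$, the $H_2$-exponent of the right-hand side equals $-2y + (n-k+1)a_1 + (n-k+2) + a_3 + \cdots + a_k - a_m$, which after applying $a_1 \leq 1$ is bounded above by $-A_k - a_m$. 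Consequently $|\xi-\alpha| \ll C^2 H_2^{-A_k-a_m}$ for every $m\in\{1,\dots,k\}$.

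To derive the first bound $|\xi-\alpha| \ll C^2 H_2^{-A_k-1}$ of the stated minimum, I would distinguish two cases. If $m\geq 2$, the inequality $a_m \geq a_2 = 1$ immediately gives $H_2^{-A_k-a_m} \leq H_2^{-A_k-1}$. The case $m = 1$ is the main bookkeeping obstacle: instead of bounding $H_1\leq H_2$ in the full factor $H_1^{n-k+1}$, one first cancels $H_m^{-1} = H_1^{-1}$ with one copy of $H_1$, leaving $H_1^{n-k}$; applying $a_1 \leq 1$ to this reduced factor then yields an $H_2$-exponent of exactly $-A_k-1$ by direct substitution in the definition of $A_k$.

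For the second bound $|\xi-\alpha| \ll C^2 (cH(\alpha))^{-A_k/a_k-1}$, I would start from the intermediate estimate $|\xi-\alpha| \ll C^2 H_2^{-A_k-a_m}$ already obtained. Since $H(\alpha) \ll H_m = H_2^{a_m}$, there exists a constant $c > 0$ depending only on $n$ and $\xi$ such that $H_2 \geq (cH(\alpha))^{1/a_m}$, whence
\begin{align*}
    H_2^{-A_k-a_m} \leq (cH(\alpha))^{-A_k/a_m-1}.
\end{align*}
Finally, the inequalities $a_m \leq a_k$ and $A_k \geq 0$ give $A_k/a_m \geq A_k/a_k$, so the right-hand side is at most $(cH(\alpha))^{-A_k/a_k-1}$ provided $cH(\alpha) \geq 1$; the remaining case is trivial up to adjusting the implicit constant. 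Taking the minimum of the two resulting bounds yields \eqref{eq thm : exposant A a la base de fonction F}.
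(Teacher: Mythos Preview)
Your argument is essentially the paper's: apply Proposition~\ref{lem: premiere estimation brute pour xi-alpha} with $\delta = CH_2^{-y}$, rewrite the resulting bound as $|\xi-\alpha|\ll C^2 H_2^{-A_k-a_m}$ via $a_1\le 1$, and then extract both halves of the minimum. The paper compresses your case analysis into the single (unjustified) assertion that one may take $m\in\{2,\dots,k\}$; your version makes the $m=1$ bookkeeping explicit, which is clearer. Your derivation of the $(cH(\alpha))^{-A_k/a_k-1}$ bound is correct for every value of $m$ returned by the proposition, including $m=1$.

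One small point worth flagging: your handling of the $m=1$ case for the $H_2^{-A_k-1}$ bound (cancel $H_1^{-1}$ against one copy of $H_1$, leaving $H_1^{n-k}$, then apply $a_1\le 1$) tacitly needs $n-k\ge 0$, i.e.\ $k\le n$. When $k=n+1$ the factor $H_1^{n-k+1}$ equals $1$, so there is no copy to cancel, and the residual $H_1^{-1}$ cannot be bounded by $H_2^{-1}$ via $H_1\le H_2$. The paper's own proof hides exactly the same issue behind its bare assertion $m\in\{2,\dots,k\}$. This edge case touches only the $H_2^{-A_k-1}$ half of the conclusion (used downstream merely to force $|\xi-\alpha|\to 0$ as $H_2\to\infty$, for which the weaker $H_2^{-A_k-a_1}$ already suffices); the $H(\alpha)$ bound that carries the Diophantine content is unaffected.
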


\begin{Rem}
    Since $A_k\geq 0$, equation~\eqref{eq thm : exposant A a la base de fonction F} implies that $|\xi-\alpha| \ll 1/H_2$ tends to $0$ as $H_2$ tends to infinity. Consequently $H(\alpha)$ tends to infinity as $H_2$ tends to infinity.
\end{Rem}

\begin{proof}
    Set $\delta = CH_2^{-y}$. By Proposition~\ref{lem: premiere estimation brute pour xi-alpha}, there exists an algebraic number $\alpha$ of degree at most $n$ and $m\in\{2,\dots,k\}$ such that
    \begin{align}
        \label{eq proof: Cor: theorem vers fonction F via semi-resultant}
        cH(\alpha)\leq H_m \AND |\xi-\alpha| \ll \delta^2  H_2^{2n-2k+3}H_3\cdots H_kH_m^{-1} = C^2H_2^{-A_k-a_m},
    \end{align}
    where $c>0$ depends on $\xi$ and $n$ only. Since $a_m\geq 1$, we have $|\xi-\alpha| \ll C^2H_2^{-A_k-1}$. Furthermore, using $a_k\geq a_m$ and $A_k\geq 0$, Estimates~\eqref{eq proof: Cor: theorem vers fonction F via semi-resultant} also yield
    \begin{align*}
        |\xi-\alpha| \ll  C^2H_m^{-A_k/a_m-1} \leq C^2H_m^{-A_k/a_k-1} \leq C^2(cH(\alpha))^{-A_k/a_k-1}.
    \end{align*}
\end{proof}

\section{A step toward Wirsing's conjecture}
\label{section: introduction de la fonction F}

Let $\xi$ be a transcendental real number and $n \geq 2$ be an integer. In this section, we merge the main results of the preceding two sections to provide a lower bound for $\omega_n^*(\xi)$. This uses the following notation. Given $x\geq n$ we define
\[
    \cA(x) = \Big\{\balpha=(a_2,\dots,a_{n+1})\in\bR^n \,;\, 1 = a_2 \leq \cdots \leq a_{n+1} \AND a_2+ \cdots + a_{n+1} = x \Big\}.
\]
For each $\balpha = (a_2,\dots,a_{n+1})\in \cA(x)$ and each integer $k$ with $2 \leq k \leq n+1$, we set
\begin{align*}
    A_k(x,\balpha) & = 2x - 2(n-k+1)- \sum_{i=2}^k a_i= 2(x-n) + \sum_{i=2}^k (2-a_i),
\end{align*}
and
\begin{align*}
    F(x,\balpha) = \max_{2\leq k \leq n+1} \frac{A_k(x,\balpha)}{a_k}.
\end{align*}
Since $\balpha \mapsto F(x,\balpha)$ is continuous on the compact set $\cA(x)$, we may also define
\[
    F(x) = \min_{\ba\in\cA(x)} F(x,\balpha).
\]
Note that the condition $a_2+\cdots + a_{n+1} = x$ in the definition of $\cA(x)$ is equivalent to $x = A_{n+1}(x,\ba)$. Furthermore, for each $\balpha\in\cA(x)$, we have
\begin{align}
    \label{eq: minoration A_k par 1}
    A_k(x,\balpha) \geq 1 \qquad \textrm{for } k=2,\dots,n+1,
\end{align}
since $2(n-k+1)+ a_2+\cdots + a_k \leq (n-k+1) + a_2 + \cdots + a_{n+1} \leq 2x-1$.

\begin{Thm}
    \label{Thm : omega_n^* >= F_0}
    We have
    \[
        \omega_n^*(\xi) \geq \minF:=\inf_{x\geq n} F(x).
    \]
\end{Thm}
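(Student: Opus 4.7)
The plan is to combine the polynomial families built in Section~\ref{section : construction points à considérer} with the generalized-resultant estimate of Corollary~\ref{Cor: theorem vers fonction F via semi-resultant}, optimize the resulting exponent over the index $k$ to recover $F(x)$, and finally let the auxiliary parameter $\ee$ tend to $0$.

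By Remark~\ref{Rem: section notation, exposants non extremal}, we may assume $\omega_n(\xi)<\infty$; otherwise $\omega_n^*(\xi)=\infty\geq\minF$ trivially. Fix a small $\ee\in(0,1/2)$. Since $\PP(\ee)=\PP_0(\ee)\sqcup\PP_1(\ee)$ is infinite, at least one of the two pieces contains polynomials of arbitrarily large norm; denote it $\PP_*(\ee)$. For each $P\in\PP_*(\ee)$ with $\norm{P}$ sufficiently large, Lemma~\ref{lem: estimations de base pour Wirsing} furnishes linearly independent polynomials $Q_1,\dots,Q_{n+1}\in\bZ[X]_{\leq n}$ with $Q_2=P$, $\gcd(Q_1,Q_2)=1$, heights $H_i:=\norm{Q_i}$ satisfying $H_1\leq\cdots\leq H_{n+1}$ and $H_2\cdots H_{n+1}=H_2^x$ for some $x\geq n$, and
\begin{align*}
    \max_{1\leq i\leq n+1}|Q_i(\xi)| \ll H_2^{-y},
\end{align*}
where either $y=x$ (if $P\in\PP_0(\ee)$) or $y=x-\ee$ (if $P\in\PP_1(\ee)$). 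Setting $a_i=\log H_i/\log H_2$ for $i=2,\dots,n+1$ yields $\ba:=(a_2,\dots,a_{n+1})\in\cA(x)$ with $a_2=1$.

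Pick an index $k^*\in\{2,\dots,n+1\}$ realizing $F(x,\ba)=A_{k^*}(x,\ba)/a_{k^*}$. The quantity $A_k$ appearing in the hypothesis of Corollary~\ref{Cor: theorem vers fonction F via semi-resultant} equals $A_k(x,\ba)-2(x-y)\geq A_k(x,\ba)-2\ee$, so by \eqref{eq: minoration A_k par 1} it is at least $1-2\ee>0$ and the hypothesis holds uniformly. Applying the corollary with $k=k^*$ yields an algebraic number $\alpha$ of degree at most $n$ satisfying
\begin{align*}
    |\xi-\alpha| \ll H(\alpha)^{-(A_{k^*}(x,\ba)-2\ee)/a_{k^*}-1} \leq H(\alpha)^{-F(x,\ba)+2\ee-1} \leq H(\alpha)^{-\minF+2\ee-1},
\end{align*}
where we have used $a_{k^*}\geq 1$ and $F(x,\ba)\geq F(x)\geq\minF$, and the implicit constants depend only on $n$ and~$\xi$.

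As $\norm{P}$ grows through $\PP_*(\ee)$, so does $H_2$, and by the remark following Corollary~\ref{Cor: theorem vers fonction F via semi-resultant} so does $H(\alpha)$. Absorbing the implicit constant by enlarging the exponent slightly, we obtain infinitely many distinct algebraic numbers $\alpha$ of degree at most $n$ satisfying $|\xi-\alpha|\leq H(\alpha)^{-\minF-1+3\ee}$, whence $\omega_n^*(\xi)\geq\minF-3\ee$. Letting $\ee\to 0$ concludes the proof. The main points to watch are the uniform positivity of the Corollary hypothesis $A_k\geq 0$ across both cases $\PP_0(\ee)$ and $\PP_1(\ee)$, which is secured by the structural lower bound \eqref{eq: minoration A_k par 1} for $\ee$ small enough, and the fact that the approximants produced from distinct $P$ cannot cluster on finitely many algebraic numbers, since their distance to $\xi$ tends to zero and thus forces $H(\alpha)\to\infty$.
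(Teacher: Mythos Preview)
Your proof is correct and follows essentially the same route as the paper's: reduce to $\omega_n(\xi)<\infty$, feed the families from Lemma~\ref{lem: estimations de base pour Wirsing} into Corollary~\ref{Cor: theorem vers fonction F via semi-resultant}, verify the positivity hypothesis via \eqref{eq: minoration A_k par 1}, and let $\ee\to 0$. The only cosmetic difference is that you split into $\PP_0(\ee)$ and $\PP_1(\ee)$ and track two values of $y$, whereas the paper simply takes the common bound $y=x-\ee$ from the outset; your explicit choice of the maximizing index $k^*$ is also what the paper's phrase ``applied successively with $k=2,\dots,n+1$'' amounts to.
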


\begin{proof}
    If $\omega_n(\xi) = \infty$, then $\omega_n^*(\xi) = \infty$ (see Remark~\ref{Rem: section notation, exposants non extremal}) and we are done. We may therefore suppose that $\omega_n(\xi) < \infty$. Fix a small $\ee \in (0,1/2)$ and let $P$ be an element of the infinite set $\PP(\ee)$ defined as in Section~\ref{section : construction points à considérer}. According to Lemma~\ref{lem: estimations de base pour Wirsing}, if $\norm{P}$ is large enough, then there exist linearly independent polynomials $Q_1,\dots,Q_{n+1}\in\bZ[X]_{\leq n}$ and $x\geq n$ such that, writing $H_i = \norm{Q_i}$ for $i=1,\dots,n+1$, we have
    \begin{enumerate}
        \item $Q_1$ et $Q_2$ are coprime, with $Q_2 = P$;
        \item \label{enum preuve thm minoration thm gen: enum 2} $H_1 \leq  \dots \leq H_{n+1}$ and $H_2\cdots H_{n+1} = H_2^x$;
        \item $|Q_1(\xi)|,\dots,|Q_{n+1}(\xi)| \ll H_2^{-x+\ee}$.
    \end{enumerate}
    For $i=2,\dots,n+1$, define $a_i \geq 1$ by $H_i = H_2^{a_i}$. Condition~\ref{enum preuve thm minoration thm gen: enum 2} means that the point $\balpha = (a_2,\dots,a_{n+1})$ belongs to
    $\cA(x)$.
    Set $y=x-\ee$. By \eqref{eq: minoration A_k par 1}, for each $k\in\{2,\dots,n+1\}$,  we have
    \begin{align*}
        2y-2(n+1-k)-a_2-\cdots - a_k = A_k(x,\balpha) - 2\ee \geq 0.
    \end{align*}
    By Corollary~\ref{Cor: theorem vers fonction F via semi-resultant} applied successively with $k=2,\dots,n+1$,
    there exists an algebraic number $\alpha$ of degree at most $n$, such that
    \begin{align*}
        |\xi-\alpha| \ll H(\alpha)^{-F(x,\balpha)-1+2\ee} \leq H(\alpha)^{-F(x)-1+2\ee} \leq  H(\alpha)^{-\minF-1+2\ee}.
    \end{align*}
    Recall that $H(\alpha)$ tends to infinity with $\norm{P}$, because we have $|\xi-\alpha| \ll H_2^{-1} = \norm{P}^{-1}$ in view of the last estimate of Corollary~\ref{Cor: theorem vers fonction F via semi-resultant}. Since Since $\PP(\ee)$ is infinite, we deduce that
    \begin{align*}
        \omega_n^*(\xi) \geq \minF-2\ee,
    \end{align*}
    and we get the result by letting $\ee$ tend to $0$.
\end{proof}

\section{A minimization problem}
\label{Section: points realisant le min}

Let the notation be as in Section~\ref{section: introduction de la fonction F}. Theorem~\ref{Thm : omega_n^* >= F_0} calls for a lower bound estimate for $\minF$. We first prove that there exists a point $(x,\ba)\in\bR^{n+1}$ with $x\geq n$ and $\ba\in\cA(x)$ satisfying $F(x, \ba) = \minF$. Then, we give a complete description of $\ba$ as a function of $\minF$ and $x$ and some integer $\ell$ with $2\leq \ell \leq n$. In the final Section~\ref{section: minration finale}, we use these properties to give an explicit lower bound for $\minF$ and deduce Theorem~\ref{Thm : main}. Our approach is inspired by the remarkable strategy described by de La Vallée-Poussin in \cite[Chapter VI]{DeLaValleePoussinApprox1919} to construct polynomials of best approximation to a continuous real valued function on a closed interval on $\bR$.

\begin{Thm}
    \label{Thm : description du min}
    There exists a point $(x,\ba)\in \bR^{n+1}$, with $\ba = (a_2,\dots,a_{n+1})$, such that
    \begin{align}
        \label{eq:Thm : description du min:eq1}
       x\geq n, \quad \ba\in\cA(x) \AND \minF = F(x,\ba).
    \end{align}
    Any such point has the following properties.
    \begin{enumerate}
        \item \label{item:Thm : description du min:item 1} There exists $\ell \in \{2,\dots, n\}$ such that $\minF = 2(x-n)+\ell-1$ and
            \begin{align*}
                x = (2-\theta)\minF, \quad \textrm{where } \theta = \left(\frac{\minF}{\minF+1}\right)^{n+1-\ell}.
            \end{align*}
        \item \label{item:Thm : description du min:item 2} The point $\ba = (a_2,\dots,a_{n+1})$ is given by $a_2 = \cdots = a_\ell = 1$, and
        \begin{align*}
            a_k = 2 - \left(\frac{\minF}{\minF+1} \right)^{k-\ell}, \qquad \textrm{for } k=\ell,\dots,n+1.
        \end{align*}
        \item \label{item:Thm : description du min:item 3} We have
        \begin{align*}
            \minF =  \frac{A_{\ell+1}(x,\ba)}{a_{\ell+1}} = \cdots =  \frac{A_{n+1}(x,\ba)}{a_{n+1}}.
        \end{align*}
    \end{enumerate}
\end{Thm}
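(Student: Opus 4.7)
My plan combines a compactness argument for existence with a perturbation / active-set analysis in the spirit of the Chebyshev--de La Vall\'ee-Poussin equioscillation principle invoked by the author.

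For existence, since $a_2=1$ on $\cA(x)$ we have $F(x,\ba)\geq A_2(x,\ba)/a_2=2(x-n)+1$ uniformly in $\ba$, so $F(x,\ba)\to\infty$ as $x\to\infty$. Combined with $F(n,(1,\ldots,1))=n$, this confines the infimum to the compact set $\{(x,\ba):n\leq x\leq (3n-1)/2,\ \ba\in\cA(x)\}$, on which $F$ is continuous, so the minimum is attained. Let $(x,\ba)$ be any minimizer and set $\ell=\max\{k:a_k=1\}$. The bound $\ell\leq n$ follows from a direct perturbation: if $\ba=(1,\ldots,1)$ then replacing $a_{n+1}$ by $1+\ee$ yields $\max_k r_k=\max\bigl(n-1+2\ee,(n+\ee)/(1+\ee)\bigr)<n$ for small $\ee>0$, contradicting minimality.

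The heart of the argument is to show $r_\ell=r_{\ell+1}=\cdots=r_{n+1}=\minF$, where $r_k:=A_k/a_k$; items 1--3 will then follow by a straightforward recurrence. Viewing $x=\sum_i a_i$ as a function of $\ba$, one computes
\[
\frac{\partial A_k}{\partial a_j}=\begin{cases}1&\text{if }j\leq k,\\ 2&\text{if }j>k,\end{cases}\qquad \frac{\partial r_k}{\partial a_k}=\frac{1-r_k}{a_k}.
\]
Setting $S=\{k:r_k(\ba)=\minF\}$ and $k^\star=\max S$, I first show $k^\star=n+1$: otherwise, letting $j$ be the smallest index $\geq k^\star+1$ with $a_{j+1}>a_j$ (convention $a_{n+2}=\infty$), decreasing the coordinates $a_{k^\star+1},\ldots,a_j$ by a common amount $\delta>0$ is an admissible move that strictly decreases every $r_k$ with $k\leq k^\star$ (all the relevant partials being positive) and keeps $r_k<\minF$ for $k>k^\star$ for small $\delta$ by continuity, contradicting minimality. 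The same mechanism applied backward shows that $S$ is a contiguous tail $\{k_0,\ldots,n+1\}$, and since $r_k=2(x-n)+k-1$ is strictly increasing on $\{2,\ldots,\ell\}$ we get $k_0\leq\ell$; the reverse inequality $k_0\geq\ell$ is obtained by an analogous perturbation at the other end (if $r_\ell<\minF$ one can perturb $a_\ell$ jointly with the last coordinates of the active tail to decrease $\minF$).

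With $r_\ell=\cdots=r_{n+1}=:r=\minF$ in hand, the recurrence is routine. Writing $\rho=r/(r+1)$, the identity $r_k=r_{k+1}$ combined with $A_{k+1}=A_k+2-a_{k+1}$ and $A_k=r a_k$ yields $(r+1)a_{k+1}=2+r a_k$, i.e.\ $a_{k+1}-2=\rho(a_k-2)$; with $a_\ell=1$ this gives $a_k=2-\rho^{k-\ell}$ for $k=\ell,\ldots,n+1$ (item 2), and in particular $a_{n+1}=2-\theta$ with $\theta=\rho^{n+1-\ell}$. The identity $\minF=2(x-n)+\ell-1$ of item 1 is $r_\ell=A_\ell=r$ rewritten using $a_\ell=1$, and the compact form $x=(2-\theta)\minF$ follows from $r_{n+1}=x/a_{n+1}=r$; item 3 is then immediate. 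The main technical obstacle is the perturbation step when several consecutive $a_i$ happen to be equal and one is forced to move a whole block of coordinates in unison; enlarging the moving block up to the next strict inequality of $(a_i)$, combined with the sign information provided by the partial derivatives above, closes the argument.
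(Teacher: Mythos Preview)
Your overall architecture---compactness for existence, perturbation/active-set analysis, then a first-order recurrence---matches the paper's, and the recurrence part at the end is correct. But the central step, showing $r_\ell=\minF$ (equivalently $\minF=2(x-n)+\ell-1$), has a real gap.

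First, a labeling slip: strict monotonicity of $r_k$ on $\{2,\dots,\ell\}$ gives $k_0\geq\ell$, not $k_0\leq\ell$ (if $k_0<\ell$ then $r_{k_0}=r_{k_0+1}$ contradicts $r_{k_0}<r_{k_0+1}$). So the inequality that actually requires work is $k_0\leq\ell$, and your justification for it---``perturb $a_\ell$ jointly with the last coordinates of the active tail''---does not go through. Using your own formulas, $\partial r_k/\partial a_\ell=1/a_k>0$ for every active $k\geq\ell+1$, so moving $a_\ell$ upward \emph{increases} each active ratio; and decreasing any single $a_{j}$ with $j\geq\ell+1$ increases $r_{j}$ since $r_j=\minF>1$. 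Thus no simple ``block'' move of the type you describe is a descent direction for all of $r_{\ell+1},\dots,r_{n+1}$ simultaneously. Worse, when $\ell=2$ the coordinate $a_\ell=a_2=1$ is frozen by the constraint, so your proposed move is not even available. The same issue affects your argument for $k^\star=n+1$: when $a_{k^\star}=a_{k^\star+1}$ and you enlarge the moving block to include $a_{k^\star}$, the derivative of $r_{k^\star}$ along that move is $-(2j-k^\star-m+1-\minF)/a_{k^\star}$, whose sign is not controlled by your ``sign information''.

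The paper circumvents this by a different mechanism. It first proves (Lemma~7.3, via a \emph{single}-coordinate perturbation that is automatically admissible whenever $a_j<a_{j+1}$) that $a_j<a_{j+1}\Rightarrow r_{j+1}=\minF$, and deduces strict monotonicity $a_\ell<\cdots<a_{n+1}<2$ together with $r_{\ell+1}=\cdots=r_{n+1}=\minF$. To close the gap $r_\ell=\minF$, it does \emph{not} look for a descent direction in $\ba$-space; instead it builds (Lemmas~7.4--7.5) a one-parameter family $(x(F),\ba(F))$ with $r_{\ell+1}=\cdots=r_{n+1}=F$ and $\ba(F)\in\cA(x(F))$ for $F$ near $\minF$ (this uses $\theta\neq 1/2$). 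If $y<\minF$ strictly, taking $F=\minF-\ee$ yields $F(x',\ba')=F<\minF$, a contradiction. This parametric-family step is the substantive idea your sketch is missing.
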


Theorem~\ref{Thm : description du min} implies that there are at most $n-1$ points satisfying \eqref{eq:Thm : description du min:eq1} (for such a point is entirely determined by the integer $\ell$). Note that the first part of \ref{item:Thm : description du min:item 1} combined with $a_\ell = 1$ ensures that the formula in \ref{item:Thm : description du min:item 3} is also valid for the index $\ell$. In order to prove the above theorem, we first prove that the infimum $\minF$ is actually a minimum.

\begin{Lem}
    \label{Lem: ens des points realisant le min compact non vide}
    We have $F_n < n$, and the set $\cM_n$ of points $(x,\ba)\in\bR^{n+1}$ satisfying \eqref{eq:Thm : description du min:eq1} is non-empty.
    Furthermore, any $(x,\ba)\in \cM_n$ has $n < x < (3n-1)/2$.
\end{Lem}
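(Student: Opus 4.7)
The plan has three parts.

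\textbf{Part (i): $\minF < n$.} I would exhibit the explicit family $\ba(x) = (1, 1, \ldots, 1, x - n + 1) \in \cA(x)$ (with $a_i = 1$ for $2 \leq i \leq n$ and $a_{n+1} = x - n + 1$), valid for every $x \geq n$. Since $a_2 = \cdots = a_n = 1$, direct computation gives $A_k(x, \ba(x))/a_k = 2(x-n) + k - 1$ for $k = 2, \ldots, n$, whose maximum over this range is $2x - n - 1$ attained at $k = n$; and $A_{n+1}(x,\ba(x))/a_{n+1} = x/(x - n + 1)$. Setting $x = n + \varepsilon$ with $0 < \varepsilon < 1/2$, the first of these two quantities equals $n - 1 + 2\varepsilon < n$, while the second equals $(n + \varepsilon)/(1 + \varepsilon) < n$ (the latter because $n \geq 2$ implies $(n+\varepsilon)/(1+\varepsilon) < n$ for every $\varepsilon > 0$). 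Consequently $\minF \leq F(x, \ba(x)) < n$.

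\textbf{Part (ii): $\cM_n \neq \emptyset$ by compactness.} The condition $a_2 = 1$ in $\cA(x)$ yields the universal lower bound $F(x, \ba) \geq A_2(x,\ba)/a_2 = 2(x-n) + 1$. Combined with $\minF < n$, this forces every approximate minimizer to satisfy $x < (3n-1)/2$, because $x \geq (3n-1)/2$ would give $F(x, \ba) \geq n > \minF$. Hence $\minF$ is the infimum of $F$ on the set $\cK = \{(x, \ba) \in \bR^{n+1} : n \leq x \leq (3n-1)/2, \, \ba \in \cA(x)\}$, which is a compact polytope (closed and bounded). On $\cK$, the function $(x, \ba) \mapsto F(x, \ba)$ is a finite maximum of rational functions with denominators $a_k \geq 1$, hence continuous, so it attains its infimum.

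\textbf{Part (iii): the strict bounds on $x$.} The lower bound $x > n$ is immediate: $\cA(n) = \{(1, \ldots, 1)\}$, and for this unique point one computes $A_k/a_k = k - 1$, so $F(n, (1, \ldots, 1)) = n > \minF$, ruling out $x = n$ as a minimizer. The upper bound $x < (3n-1)/2$ is the same inequality already used in Part (ii), applied to $(x,\ba) \in \cM_n$: $2(x-n) + 1 \leq F(x, \ba) = \minF < n$, which is strict because $\minF < n$ strictly.

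The only non-routine ingredient is the test family $\ba(x)$ in Part (i); it is naturally suggested by the $\ell = n$ case of the structure theorem to follow (Theorem~\ref{Thm : description du min}). The remaining arguments are elementary analysis and compactness.
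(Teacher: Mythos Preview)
Your proof is correct and essentially identical to the paper's own argument: the same test point $\ba=(1,\dots,1,1+\ee)\in\cA(n+\ee)$ to get $\minF<n$, the same lower bound $F(x,\ba)\geq A_2(x,\ba)/a_2=2(x-n)+1$ to localize $x$, and the same compactness argument on $\cK_n=\{(x,\ba):n\leq x\leq (3n-1)/2,\ \ba\in\cA(x)\}$. The only cosmetic difference is that you parametrize the test family by $x$ rather than by $\ee$.
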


\begin{proof}
    For a fixed $\ee\in[0,1/2)$, the point $\ba=(1,\dots,1,1+\ee)\in\bR^n$ belongs to $\cA(x)$ with $x=n+\ee$. It follows from the definition that $A_k(x,\ba) = 2\ee+k-1 < n$ for $k=2,\dots,n$ and
    \begin{align*}	
    	 A_{n+1}(x,\ba) = \frac{n+\ee}{1+\ee} \leq n,
    \end{align*}
    with equality if and only if $\ee=0$. Taking $0 < \ee < 1/2$, we deduce that $\minF \leq F(x,\ba) < n$. Note that for $x=n$, the set
    $\cA(n)$ reduces to $\{(1,\dots,1)\}$ and $F(n) = n$. On the other hand, for any $x\geq n$, each $\ba = (a_2,\dots,a_{n+1})\in\cA(x)$ has $a_2=1$, thus
    \begin{align*}
        F(x,\balpha) \geq \frac{A_2(x,\balpha)}{a_2} = 2(x-n)+1.
    \end{align*}
    If follows that $F(x) \geq 2(x-n)+1$. Consequently, if $x\geq (3n-1)/2$, then $F(x) \geq n > \minF$. Consider the compact subset $\cK_n$ of $\bR^{n+1}$ given by
    \begin{align*}
        \cK_n =\left\{ (x,\balpha)\in\bR^{n+1} \;|\; x\in \left[ n,\frac{3n-1}{2} \right] \AND \balpha\in\cA(x) \right\}.
    \end{align*}
    By the above, we have $\minF = \inf_{(x,\balpha)\in \cK_n} F(x,\balpha)$. Since the function $F$ is continuous on the compact set $\cK_n$,    this infimum is actually a minimum. Furthermore, since $\minF < n$, any point $(x,\ba)\in \cK_n$ realizing this minimum satisfies $n < x < (3n-1)/2$.
\end{proof}

\begin{Lem}
    \label{Lem : premiere prop du point realizant le min}
    Let $\cM_n$ be as in Lemma~\ref{Lem: ens des points realisant le min compact non vide}, let $(x,\ba)\in\cM_n$ and
    write $\ba = (a_2,\dots,a_{n+1})$. There exists an integer $\ell\in\{2,\dots,n\}$ such that
    \begin{enumerate}
        \item \label{item: Lem : premiere prop du point realizant le min: item 1} $1 = a_2 = \cdots = a_\ell < a_{\ell+1} < \cdots < a_{n+1} < 2$;
        \item \label{item: Lem : premiere prop du point realizant le min: item 2} $A_{\ell+1}(x,\ba)/a_{\ell+1} = \cdots = A_{n+1}(x,\ba)/a_{n+1} = \minF$;
        \item \label{item: Lem : premiere prop du point realizant le min: item 3} $2(x-n)+\ell-1 \leq \minF < 2(x-n)+\ell$.
    \end{enumerate}
\end{Lem}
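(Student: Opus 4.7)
My plan is to prove the three claims in order by exploiting the minimality of $(x,\ba)$. Define $\ell = \max\{k \in \{2,\dots,n+1\} : a_k = 1\}$; since $a_2 = 1$, we have $\ell \geq 2$, and I will argue separately that $\ell \leq n$ and that the ratios $r_k := A_k(x,\ba)/a_k$ satisfy $r_k = F_n$ for $k = \ell+1, \ldots, n+1$. The strict monotonicity in (i) and the bounds in (iii) then follow algebraically from this equioscillation.

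The bound $\ell \leq n$ is immediate: if $\ell = n+1$, then $\ba = (1,\ldots,1)$, $x = n$, and a direct computation yields $A_k = k-1$, hence $F(n,\ba) = n$, contradicting $F_n < n$ from Lemma~\ref{Lem: ens des points realisant le min compact non vide}.

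For the equioscillation, I would argue by contradiction: suppose some $r_{k_0} < F_n$ with $k_0 \in \{\ell+1,\ldots,n+1\}$, and construct a feasible perturbation $t \mapsto \ba(t)$ with $\ba(0) = \ba$, preserving $a_2 = 1$ and the monotonicity constraints, such that $F(x(t),\ba(t)) < F_n$ for small $t > 0$. A natural candidate is to identify the maximal block $a_p = \cdots = a_q$ adjacent to $k_0$ (with $a_{p-1} < a_p$) and uniformly decrease these $a_i$ by $t$. A derivative calculation shows that for indices $k$ outside the block the ratio $r_k$ strictly decreases at rate proportional to $1/a_k$, while for indices inside or after the block the rate is of the form $(r_k - c_k)/a_k$ for explicit constants $c_k$; continuity handles indices with $r_k < F_n$, and a case analysis (using $F_n < n$ and the specific block position) ensures the rate is non-positive wherever $r_k = F_n$. \textbf{I expect this to be the main technical obstacle}, as it requires enumerating the possible block configurations and adapting the perturbation accordingly; an alternative is a KKT/duality argument applied to the reformulation ``minimize $t$ subject to $A_k - t a_k \leq 0$ and monotonicity,'' which may provide a more unified route.

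Once (ii) is established, the assertions (i) and (iii) follow algebraically. The recurrence $A_{k+1} = A_k + 2 - a_{k+1}$ together with $r_k = r_{k+1} = F_n$ yields $a_{k+1} = (F_n a_k + 2)/(F_n + 1)$ for $k \geq \ell$. This is an affine contraction toward the fixed point $a = 2$ with ratio $\theta = F_n/(F_n+1) \in (0,1)$; starting from $a_\ell = 1$, induction gives $a_k = 2 - \theta^{k-\ell}$ for $k \geq \ell$, strictly increasing and bounded above by $2$, which proves (i). For (iii), the lower bound $2(x-n) + \ell - 1 \leq F_n$ comes from $r_\ell \leq F_n$ together with $r_\ell = A_\ell = 2(x-n) + \ell - 1$. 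The strict upper bound uses $r_{\ell+1} = F_n$, which rearranges to $A_\ell = a_{\ell+1}(F_n+1) - 2$; combined with $a_{\ell+1} > 1$ (which holds since $\ell \leq n$), this yields $A_\ell > F_n - 1$, i.e., $F_n < 2(x-n) + \ell$.
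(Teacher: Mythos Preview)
Your plan contains a genuine gap at the equioscillation step~(ii), and a minor error in how you deduce~(i) from it.

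\medskip

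\textbf{The main gap.} You propose to prove $r_k=F_n$ for all $k>\ell$ in one shot, via a block perturbation of a maximal plateau $a_p=\cdots=a_q$ adjacent to an index $k_0$ with $r_{k_0}<F_n$. But for indices $k$ inside the block, the derivative of $r_k$ under this perturbation has the sign of $A_k-(2q-p-k+1)a_k$, which is not controlled; in particular there is no reason why those ratios should decrease, so the case analysis you allude to does not close. The paper avoids this entirely with a much simpler \emph{single--coordinate} move: whenever $a_j<a_{j+1}$, decrease $a_{j+1}$ alone by $\varepsilon$ (so $x$ drops by $\varepsilon$). Then $A_k$ decreases by $2\varepsilon$ for $k\le j$ and by $\varepsilon$ for $k\ge j+1$, while only $b_{j+1}$ changes; hence every ratio with $k\neq j+1$ strictly decreases, and minimality forces $r_{j+1}=F_n$. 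This conditional lemma (``a strict jump forces equality at the next index'') is the missing key; from it one gets in two lines that $a_{n+1}<2$, that no plateau can occur above level $1$ (so (i) holds), and then (ii) is immediate since every $j\ge\ell$ is a jump. Your direct attack on (ii), without first isolating this lemma, is both harder and, as written, incomplete.

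\medskip

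\textbf{A minor slip in deducing (i).} Your recurrence $a_{k+1}=(F_na_k+2)/(F_n+1)$ requires $r_k=F_n$ \emph{and} $r_{k+1}=F_n$, so it only holds for $k\ge\ell+1$; it does not start at $a_\ell=1$ unless $r_\ell=F_n$, which is not part of (ii) (and at this stage is only an inequality). Hence your explicit formula $a_k=2-\theta^{k-\ell}$ is not yet justified --- that is the content of the later Theorem~\ref{Thm : description du min}, which needs an extra argument. What \emph{does} follow from (ii) alone is the recurrence from $k=\ell+1$ onward with initial value $a_{\ell+1}=(A_\ell+2)/(F_n+1)\in(1,2)$ (using $a_{\ell+1}>1$ by definition of $\ell$ and $A_\ell\le F_n$); this suffices for the strict monotonicity and the bound $a_{n+1}<2$ in (i), so the slip is repairable. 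Your argument for (iii) is correct.
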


\begin{proof}
    \noindent\textbf{Step $1$.} Suppose that $a_j < a_{j+1}$ for an integer $j$ with $2\leq j \leq n$. We claim that
    \begin{align}
        \label{eq proof: eq 0 lemme inter tete du point realisant le min}
        \frac{A_{j+1}(x,\ba)}{a_{j+1}} = \minF.
    \end{align}
    Indeed, for each $\ee \in (0,a_{j+1}-a_j]$, the point
    \begin{align*}
        \bb = (b_2,\dots, b_{n+1}) = (a_2,\dots,a_j,a_{j+1}-\ee,a_{j+2},\dots,a_{n+1})
    \end{align*}
    belongs to $\cA(y)$, where $y = x-\ee$. Since $b_2 = 1$, we have $y = b_2+\cdots + b_{n+1} \geq n$. By definition of the functions $A_k$
    we have
    \begin{align*}
        A_k(y,\bb) = \left\{\begin{array}{ll}
          A_k(x,\ba)-2\ee & \textrm{for } k=2,\dots,j, \\
          A_k(x,\ba)-\ee & \textrm{for } k=j+1,\dots,n+1.
        \end{array} \right.
    \end{align*}
    So, for each $k\neq j+1$, we find
    \begin{align*}
        \frac{A_k(y,\bb)}{b_k} < \frac{A_k(x,\ba)}{a_k} \leq F(x,\ba) = \minF.
    \end{align*}
    However, by minimality of $F_n$, we have $F(y,\bb) \geq \minF$, thus
    \begin{align*}
        F(y,\bb) = \frac{A_{j+1}(y,\bb)}{b_{j+1}} = \frac{A_{j+1}(x,\ba)-\ee}{a_{j+1}-\ee} \geq \minF.
    \end{align*}
    Letting $\ee$ tend to $0$, we obtain $\minF \leq A_{j+1}(x,\ba)/a_{j+1} \leq F(x,\ba)$, hence our claim.

    \medskip

    \noindent\textbf{Step $2$.} Suppose that $a_{n+1} \geq 2$. Since $a_2 = 1 < 2$, there exists an integer $j$ with $2\leq j \leq n$ such that $a_j < 2 \leq a_{j+1}$.  Using Step~$1$, we get
    \begin{align*}
        \minF = F(x,\ba) \geq \frac{A_j(x,\ba)}{a_j} = \frac{A_{j+1}(x,\ba)+a_{j+1}-2}{a_j} > \frac{A_{j+1}(x,\ba)}{a_{j+1}} = \minF,
    \end{align*}
    a contradiction. Hence $1 = a_2 \leq a_3 \leq \cdots \leq a_{n+1} < 2$.

    \medskip

    \noindent\textbf{Step $3$.} Let $\ell$ be the largest integer in $\{2,\dots,n+1\}$ such that $a_\ell = 1$. As  $1 = a_2 \leq \cdots \leq a_{n+1}$, we have $a_2 = \cdots = a_\ell = 1$. If $\ell = n+1$, then $x=n$, which contradicts Lemma~\ref{Lem: ens des points realisant le min compact non vide}, so $\ell\leq n$. By contradiction, suppose that assertion~\ref{item: Lem : premiere prop du point realizant le min: item 1} is false. Then $\ell < n$ and by Step~$2$ there exists an integer $j$ with $\ell \leq j \leq n-1$ such that $a_j < a_{j+1} = a_{j+2} < 2$. Using Step~$1$, we obtain
        \begin{align*}
        \minF = F(x,\ba) \geq \frac{A_{j+2}(x,\ba)}{a_{j+2}} = \frac{A_{j+1}(x,\ba)+2-a_{j+2}}{a_{j+1}} > \frac{A_{j+1}(x,\ba)}{a_{j+1}} = \minF,
    \end{align*}
    a contradiction. Thus \ref{item: Lem : premiere prop du point realizant le min: item 1} holds, and by Step~$1$, it yields \ref{item: Lem : premiere prop du point realizant le min: item 2}. Finally,
    assertion~\ref{item: Lem : premiere prop du point realizant le min: item 3} follows from $a_{\ell+1}>1$ and
    \begin{align*}
        2(x-n)+\ell-1 = \frac{A_\ell(x,\ba)}{a_\ell} \leq \minF = \frac{A_{\ell+1}(x,\ba)}{a_{\ell+1}} < A_{\ell+1}(x,\ba) < 2(x-n)+\ell.
    \end{align*}
\end{proof}

\begin{Lem}
    \label{Lem : second prop du point type point-min}
    Let $x\in[n,(3n-1)/2]$, let $\ell\in\{2,\dots,n\}$, let $\ba= (a_2,\dots,a_{n+1})\in\bR^n$ with $a_2 = \cdots = a_\ell = 1$ and let $y, F\in\bR$ with
    \[
        y = 2(x-n)+\ell-1 \leq F < y+1.
    \]
    The following assertions are equivalent
    \begin{enumerate}
      \item \label{item:Lem: second prop du point type point-min:item 1} For $k=\ell+1,\dots,n+1$, we have
        \begin{align}
            \label{eq Lem : second prop du point type point-min : eq 1}
            \frac{A_{k}(x,\ba)}{a_{k}} = F.
        \end{align}
      \item \label{item:Lem: second prop du point type point-min:item 3} For $k=\ell+1,\dots,n+1$, we have
        \begin{align}
            \label{eq Lem : second prop du point type point-min : eq 3}
            a_k = 2 - \frac{2F-y}{F+1}\left(\frac{F}{F+1}\right)^{k-\ell-1}.
        \end{align}
    \end{enumerate}
    If they hold, then $1 < a_{\ell+1} < \cdots < a_{n+1}<2$.
\end{Lem}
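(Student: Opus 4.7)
My plan is to observe that both assertions uniquely determine the same sequence $a_{\ell+1},\dots,a_{n+1}$, so it suffices to solve the linear recurrence in \ref{item:Lem: second prop du point type point-min:item 1} and check that the closed form \eqref{eq Lem : second prop du point type point-min : eq 3} is its solution. The key identity is
\[
A_{k}(x,\ba) = A_{k-1}(x,\ba) + (2-a_k) \qquad (k\geq 2),
\]
which follows directly from the definition of $A_k$. Starting from $A_\ell(x,\ba) = 2(x-n)+\ell-1 = y$ (since $a_2=\cdots=a_\ell=1$), the system \eqref{eq Lem : second prop du point type point-min : eq 1} is an affine triangular system in the unknowns $a_{\ell+1},\dots,a_{n+1}$ that admits a unique solution.

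To carry out the implication \ref{item:Lem: second prop du point type point-min:item 1} $\Rightarrow$ \ref{item:Lem: second prop du point type point-min:item 3}, I would combine $A_k(x,\ba)=Fa_k$ with the recurrence above to obtain
\[
F a_k = F a_{k-1} + 2 - a_k, \qquad\textrm{i.e.}\qquad a_k - 2 = \frac{F}{F+1}\bigl(a_{k-1}-2\bigr)
\]
for $k=\ell+2,\dots,n+1$, while for $k=\ell+1$ the equation $Fa_{\ell+1} = y+2-a_{\ell+1}$ gives the initial condition $a_{\ell+1} - 2 = -(2F-y)/(F+1)$. Iterating the geometric recurrence yields \eqref{eq Lem : second prop du point type point-min : eq 3}. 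The converse \ref{item:Lem: second prop du point type point-min:item 3} $\Rightarrow$ \ref{item:Lem: second prop du point type point-min:item 1} is then immediate by an induction on $k$ using the same recurrence and the formula $A_{\ell}(x,\ba)=y$.

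Finally, for the strict inequalities $1 < a_{\ell+1} < \cdots < a_{n+1} < 2$, I would use the closed form and the hypotheses on $F$. Since $y\geq \ell-1\geq 1 > 0$, we have $F\geq y>0$, hence $\theta:=F/(F+1)\in(0,1)$; moreover $2F-y\geq F>0$, so the coefficient $(2F-y)/(F+1)$ in \eqref{eq Lem : second prop du point type point-min : eq 3} is positive. Therefore $a_k<2$ for all $k\geq\ell+1$, and since $|a_k-2|=\theta^{k-\ell-1}(2F-y)/(F+1)$ is strictly decreasing in $k$ while $a_k<2$, the sequence $(a_k)_{k\geq\ell+1}$ is strictly increasing. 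Lastly,
\[
a_{\ell+1}-1 = \frac{y+2}{F+1} - 1 = \frac{y+1-F}{F+1} > 0
\]
by the strict inequality $F<y+1$, giving $a_{\ell+1}>1$. I do not foresee a serious obstacle: the whole statement reduces to solving a first order linear recurrence and to a sign analysis of the explicit formula, both bounded by the assumption $y\leq F<y+1$.
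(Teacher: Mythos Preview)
Your proof is correct and follows essentially the same approach as the paper: both derive the affine recurrence $a_{k+1}=\frac{F}{F+1}a_k+\frac{2}{F+1}$ from the identity $A_{k+1}(x,\ba)=A_k(x,\ba)+2-a_{k+1}$, solve it with the initial condition $a_{\ell+1}=(y+2)/(F+1)$, and then read off the inequalities from the closed form together with $y\leq F<y+1$. Your treatment of the final inequalities is in fact slightly more explicit than the paper's (you justify $F>0$ and compute $a_{\ell+1}-1$ directly), but the argument is the same.
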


\begin{proof}
    \ref{item:Lem: second prop du point type point-min:item 1} $\Leftrightarrow$ \ref{item:Lem: second prop du point type point-min:item 3}. As $A_{\ell+1} = y+2-a_{\ell+1}$, we first observe that \eqref{eq Lem : second prop du point type point-min : eq 1}
    holds for $k=\ell+1$ if and only if $a_{\ell+1} = (y+2)/(F+1)$. Suppose that \eqref{eq Lem : second prop du point type point-min : eq 1} holds for an index $k$ with $\ell+1 \leq k \leq n$. Then, since $A_{k+1}(x,\ba) = A_{k}(x,\ba)+ 2 - a_{k+1}$, the equality \eqref{eq Lem : second prop du point type point-min : eq 1} holds for $k+1$ if and only if
    \begin{align}
        \label{eq Lem : second prop du point type point-min : eq 2}
        a_{k+1} = \frac{F}{F+1} a_k+\frac{2}{F+1}.
    \end{align}
    By the above remark, \ref{item:Lem: second prop du point type point-min:item 1} holds if and only if $a_{\ell+1} = (y+2)/(F+1)$ and \eqref{eq Lem : second prop du point type point-min : eq 2} is satisfied for $k=\ell+1,\dots,n$. This is precisely the arithmetico-geometric sequence of \ref{item:Lem: second prop du point type point-min:item 3}.

    \medskip

    Finally, the hypothesis $y\leq F < y+1$ implies that
	\begin{align*}
		\frac{F}{F+1} \leq \frac{2F-y}{F+1} < 1,
	\end{align*}
    so that if \eqref{eq Lem : second prop du point type point-min : eq 3} holds, then
    $a_2 = \cdots = a_\ell < a_{\ell+1} < \cdots < a_{n+1}$.
\end{proof}

\begin{Lem}
    \label{Lem : second prop du point type point-min - bis}
    With the notation and hypotheses of Lemma~\ref{Lem : second prop du point type point-min} set
    \begin{align*}
    	\theta = \left(\frac{F}{F+1}	\right)^{n+1-\ell},
    \end{align*}
    and suppose that $\ba\in\bR^n$ satisfies the two equivalent conditions \ref{item:Lem: second prop du point type point-min:item 1} and \ref{item:Lem: second prop du point type point-min:item 3}
    of Lemma~\ref{Lem : second prop du point type point-min}, as well as $1=a_2= \cdots = a_\ell$. Then, $(x,\ba)\in\cA(x)$ if and only if
    \begin{align}
    \label{eq Lem : second prop du point type point-min : eq 4}
    	(1-2\theta)x = 2F(1-\theta)-\theta(2n-\ell+1).
    \end{align}
    Moreover, if \eqref{eq Lem : second prop du point type point-min : eq 4} holds, then $\theta\neq 1/2$.
\end{Lem}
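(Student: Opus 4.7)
The plan is to prove the first part by a direct computation of $\sum_{k=2}^{n+1}a_k$ using the geometric-series shape of formula~\eqref{eq Lem : second prop du point type point-min : eq 3}, and the non-degeneracy claim $\theta\neq 1/2$ by combining a parity argument with the irrationality of $2^{1/m}$ for $m\geq 2$. The non-degeneracy half is the main difficulty: when $\theta=1/2$, equation~\eqref{eq Lem : second prop du point type point-min : eq 4} itself becomes trivial and no longer determines $x$, so a genuinely arithmetic input is needed. The telescoping computation for the equivalence, by contrast, is routine.

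For the equivalence, note that by the final assertion of Lemma~\ref{Lem : second prop du point type point-min} together with $1=a_2=\cdots=a_\ell$, the ordering $1\leq a_2\leq\cdots\leq a_{n+1}<2$ is automatically satisfied, so the condition $\ba\in\cA(x)$ reduces to $a_2+\cdots+a_{n+1}=x$. Using $1-F/(F+1)=1/(F+1)$ and $(F/(F+1))^{n+1-\ell}=\theta$, the geometric sum evaluates to
\[
    \sum_{k=\ell+1}^{n+1}a_k = 2(n+1-\ell) - \frac{2F-y}{F+1}\sum_{j=0}^{n-\ell}\left(\frac{F}{F+1}\right)^{j} = 2(n+1-\ell) - (2F-y)(1-\theta).
\]
Adding the $\ell-1$ contributions from $a_2,\dots,a_\ell$, setting the total equal to $x$, substituting $y=2(x-n)+\ell-1$ (so that $2F-y = 2F-2x+(2n-\ell+1)$), and isolating $x$ yields exactly equation~\eqref{eq Lem : second prop du point type point-min : eq 4}.

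For the non-degeneracy claim, suppose for contradiction that $\theta=1/2$ and that \eqref{eq Lem : second prop du point type point-min : eq 4} holds. Set $m=n+1-\ell$ and $s=2n-\ell+1=n+m$. Vanishing of the left-hand side of \eqref{eq Lem : second prop du point type point-min : eq 4} forces $F=s/2$, while $\theta=1/2$ gives $(F/(F+1))^m = 1/2$, that is $2^{1/m}=1+2/s$, which rewrites as $(s+2)^m = 2\,s^m$. Reducing modulo~$2$ forces $s$ to be even; writing $s=2t$, the equation simplifies to $(t+1)^m = 2\,t^m$, so $2^{1/m}=(t+1)/t\in\bQ$. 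The irrationality of $2^{1/m}$ for $m\geq 2$ then forces $m=1$, hence $t=1$, $s=2$, and $n=s-m=1$, contradicting the standing hypothesis $n\geq 2$.
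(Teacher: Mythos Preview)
Your proof is correct. For the equivalence, the paper takes a shorter route: rather than summing all coordinates via the geometric series, it observes that $\ba\in\cA(x)$ is equivalent to $x=A_{n+1}(x,\ba)$, and then uses condition~\ref{item:Lem: second prop du point type point-min:item 1} at $k=n+1$ to rewrite this as $x=a_{n+1}F$, after which only the single formula for $a_{n+1}$ from~\eqref{eq Lem : second prop du point type point-min : eq 3} is needed to reach $x=2F-(2F-y)\theta$. Your direct summation works equally well and is arguably more transparent, while the paper's trick is quicker. For the claim $\theta\neq 1/2$, the two arguments are essentially identical: the paper notes that $F=(2n-\ell+1)/2\in\bQ$ together with $(F/(F+1))^{n+1-\ell}=1/2$ forces $n+1-\ell=1$, which is exactly your irrationality-of-$2^{1/m}$ step spelled out in detail; it then concludes $\ell=n$, $F=1$, and $(n+1)/2=1$, contradicting $n\geq 2$, just as you do.
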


\begin{proof}
	Since $y = 2(x-n)+\ell-1$, Equation \eqref{eq Lem : second prop du point type point-min : eq 4} is equivalent to $x=2F - (2F-y)\theta$.
    Recall that $a_2+\cdots+a_{n+1} = x$ if and only if $x = A_{n+1}(\ba,x)$. Since the coordinates of $\ba$ are increasing, the point $(x,\ba)$ belongs to $\cA(x)$ if, and only if,
    \begin{align*}
        x = A_{n+1}(x,\ba) \mathop{=}_{\eqref{eq Lem : second prop du point type point-min : eq 1}} a_{n+1}F
        \mathop{=}_{\eqref{eq Lem : second prop du point type point-min : eq 3}} 2F - \frac{F(2F-y)}{F+1}\left(\frac{F}{F+1}\right)^{n-\ell}
        = 2F - (2F-y)\theta.
    \end{align*}
    It remains to prove the last part of the Lemma. By contradiction, suppose now that \eqref{eq Lem : second prop du point type point-min : eq 4} holds with $\theta=1/2$. Then, we obtain
    \begin{align*}
        F = \frac{2n-\ell+1}{2} \in \bQ \AND
        \frac{1}{2} = \theta = \left(\frac{F}{F+1}\right)^{n+1-\ell}.
    \end{align*}
    This implies that the exponent $n+1-\ell$ is equal to $1$, thus $\ell = n$ and $F = (n+1)/2 = 1$, which is impossible since $n\geq 2$.
\end{proof}

\begin{proof}[Proof of Theorem~\ref{Thm : description du min}]
    By Lemma~\ref{Lem: ens des points realisant le min compact non vide} there exists $(\minx,\minba) = (x,a_2,\dots,a_{n+1})\in\bR^{n+1}$ satisfying \eqref{eq:Thm : description du min:eq1}, and any such point has $n < \minx < (3n-1)/2$. Fix such a point. Then Lemma~\ref{Lem : premiere prop du point realizant le min} provides an integer $\ell\in\{2,\dots,n\}$ for which $a_2 = \cdots = a_\ell = 1$,
    \begin{align}
    	\label{eq proof:thm description du min:eq1}
        \miny:= 2(\minx-n)+\ell-1 \leq \minF < \miny+1,
    \end{align}
    and assertion~\ref{item:Thm : description du min:item 3} of Theorem~\ref{Thm : description du min} holds. It only remains to prove that $\minF=\miny$ and that $x=(2-\theta)\minF$, for then Lemma~\ref{Lem : second prop du point type point-min} implies assertion~\ref{item:Thm : description du min:item 2} of the theorem. According to Lemma~\ref{Lem : second prop du point type point-min - bis}, we have
    \begin{align*}
        (1-2\mint)\minx = 2\minF(1-\mint)-\mint(2n-\ell+1),\quad \textrm{where } \mint = \left(\frac{\minF}{\minF+1}\right)^{n+1-\ell} \neq \frac{1}{2}.
    \end{align*}
    Fix $\ee \in [0,1)$ and set $F' = \minF-\ee$. If $\ee$ is small enough, then
    \begin{align*}
         \theta' := \left(\frac{F'}{F'+1}\right)^{n+1-\ell} \neq \frac{1}{2},
    \end{align*}
    and there exists $x' = x'(\ee)\in\bR$ such that $(x',\theta',F')$ satisfy \eqref{eq Lem : second prop du point type point-min : eq 4}. By contradiction, suppose that $\miny < \minF < \miny+1$. We note that for $\ee=0$, we have $(x',F')=(\minx,\minF)$. So, if $\ee$ is small enough, we also have $n < x' < (3n-1)/2$ and $y' < F' < y'+1$, where
    \begin{align*}
    	y' = y'(\ee) = 2(x'-n)+\ell-1.
    \end{align*}
    Set $a_2' = \cdots = a_\ell' = 1$ and define $a_k'$ by \eqref{eq Lem : second prop du point type point-min : eq 3} (with $F=F'$) for $k=\ell+1,\dots,n+1$. We denote by $\ba'$ the point $(a_2',\dots,a_{n+1}')$. Then $x'$, $\ba'$, $\ell$, $y'$ and $F'$ satisfy the hypotheses of Lemmas~\ref{Lem : second prop du point type point-min} and~\ref{Lem : second prop du point type point-min - bis}.
    According to Lemma~\ref{Lem : second prop du point type point-min - bis}, and since $x'$, $F'$ satisfy \eqref{eq Lem : second prop du point type point-min : eq 4}, we have $\ba'\in\cA(x')$. Moreover, $A_k(x',\ba')/a_k' = 2(x'-n)+k-1 \leq y'$ for $k=2,\dots,\ell$, and our choice of $\ba'$ yields
    \begin{align*}
    	\frac{A_k(x',\ba')}{a_k'} = F' \geq y' \qquad \textrm{for } k=\ell+1,\dots,n+1.
    \end{align*}
    Thus $F(x',\ba') = F' < \minF$, a contradiction. It follows that $\minF = \miny$, as expected, hence assertion~\ref{item:Thm : description du min:item 2} of Theorem~\ref{Thm : description du min} holds. In particular, the last coordinate of $\minba$ multiplied by $\minF$ is equal to $(2-\mint)\minF$ by assertion~\ref{item:Thm : description du min:item 2}, and is also equal to $A_{n+1}(\minx,\minba) = \minx$ by assertion~\ref{item:Thm : description du min:item 3}. Hence the identity $(2-\mint)\minF = \minx$.
\end{proof}

\section{Proof of the main result}
\label{section: minration finale}

Let $n$ be an integer $\geq 2$. We keep the notation of Section~\ref{section: introduction de la fonction F} for the function $F$ and its minimum $\minF$. We now have all the tools we need to give an explicit lower bound for $\minF$. Together with Theorem~\ref{Thm : omega_n^* >= F_0}, the next estimate implies Theorem~\ref{Thm : main}.

\begin{Thm}
    We have $\minF \geq n/(2-\log 2)$.
\end{Thm}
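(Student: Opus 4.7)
The strategy is to reduce the bound to an elementary one-variable optimization using Theorem~\ref{Thm : description du min}, which already provides an explicit description of the minimizer. Let $(x,\ba)\in\cM_n$ be any point with $F(x,\ba)=\minF$, and let $\ell\in\{2,\dots,n\}$ and $\theta=\bigl(\minF/(\minF+1)\bigr)^{n+1-\ell}$ be as in that theorem. Substituting the identity $x=(2-\theta)\minF$ into the relation $\minF=2(x-n)+\ell-1$ yields, after simplification,
\[
   \minF\,(3-2\theta)=2n-\ell+1.
\]
Setting $m=n+1-\ell\in\{1,\dots,n-1\}$, this reads $\minF(3-2\theta)=n+m$ with $\theta=(1-1/(\minF+1))^m=(1+1/\minF)^{-m}$. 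Thus the whole problem is reduced to bounding from below the solution $\minF$ of a single scalar equation depending on the parameter $m$.

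\noindent The next step is to replace $\theta$ by an exponential lower bound. Using the classical inequality $(1+1/\minF)^{\minF}\leq e$ and the fact that $t\mapsto t^{m/\minF}$ is increasing on $(0,\infty)$, one obtains $(1+1/\minF)^m\leq e^{m/\minF}$, hence
\[
   \theta=(1+1/\minF)^{-m}\geq e^{-m/\minF}.
\]
Plugging this into the equality $\minF(3-2\theta)=n+m$ gives
\[
   n+m \;\leq\; \minF\bigl(3-2e^{-m/\minF}\bigr),
\]
which, after subtracting $m$ on both sides and writing $s=m/\minF$, becomes
\[
   n\;\leq\;\minF\,\phi(s),\qquad\text{where } \phi(s):=3-2e^{-s}-s.
\]

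\noindent A direct one-variable optimization finishes the proof: $\phi'(s)=2e^{-s}-1$ vanishes at $s=\log 2$, and $\phi''(s)=-2e^{-s}<0$, so $\phi$ attains its global maximum on $[0,\infty)$ at $s=\log 2$, with value $\phi(\log 2)=3-1-\log 2=2-\log 2$. Combining with $n\leq\minF\,\phi(s)$, we conclude
\[
   n\;\leq\;\minF\,(2-\log 2),\qquad\text{i.e.}\qquad \minF\;\geq\;\frac{n}{2-\log 2},
\]
as required. Note that the chain of inequalities also forces $\phi(m/\minF)>0$, so there is no issue with the sign of the denominator.

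\noindent\textbf{Main difficulty.} There is no serious obstacle: once Theorem~\ref{Thm : description du min} is available, the equation $\minF(3-2\theta)=n+m$ with $\theta=(1+1/\minF)^{-m}$ almost begs for the substitution $\theta\geq e^{-m/\minF}$, after which the optimization of $\phi(s)=3-2e^{-s}-s$ yields the constant $2-\log 2$ immediately. The only mildly delicate point is choosing the ``right'' exponential bound: the Bernoulli inequality $(1-y)^m\geq 1-my$ would only yield the trivial bound $\minF\geq n-m$, so one really has to use the sharper $(1+1/\minF)^m\leq e^{m/\minF}$ (equivalently, $\log(1-y)\geq -y/(1-y)$) to get the saturation at $s=\log 2$ that produces the constant $1/(2-\log 2)$ of Theorem~\ref{Thm : main}.
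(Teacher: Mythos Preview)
Your proof is correct and follows essentially the same route as the paper's. Both derive $(3-2\theta)\minF = 2n+1-\ell$ from Theorem~\ref{Thm : description du min}, use the bound $(1+1/\minF)^{m}\le e^{m/\minF}$ (equivalently $\minF\log\theta\ge -(n+1-\ell)$), and then optimize a one-variable function; the paper maximizes $t\mapsto 3-2t+\log t$ at $t=1/2$, which under the substitution $t=e^{-s}$ is exactly your $\phi(s)=3-2e^{-s}-s$ maximized at $s=\log 2$.
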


\begin{proof}
    Fix $(x,\ba)\in\bR^{n+1}$ satisfying the condition \eqref{eq:Thm : description du min:eq1} of Theorem~\ref{Thm : description du min}, and let $\ell\in\{2,\dots,n\}$ such that
    \begin{align}
        \label{eq proof main: eq1}
        \minF = 2(x-n)+\ell-1.
    \end{align}
    Set $\theta = \big(\minF/(\minF+1)\big)^{n+1-\ell}$. The formula $x= (2-\theta)\minF$ combined with \eqref{eq proof main: eq1}
    leads to
    \begin{align}
        \label{eq proof main: eq2}
        (3-2\theta)\minF = 2n+1-\ell.
    \end{align}
    Since  $t\log(1+1/t)\leq 1$ for each $t>0$, we find
    \begin{align*}
        \minF\log \theta = -(n+1-\ell)\minF\log\left(1+\frac{1}{\minF}\right) \geq -(n+1-\ell).
    \end{align*}
    Together with \eqref{eq proof main: eq2}, this yields $(3-2\theta+\log \theta)\minF \geq n$. Finally, the function $t\mapsto 3-2t+\log t$ has a global maximum on $(0,\infty)$ at $t=1/2$, which is equal to $2-\log 2$, hence $(2-\log 2)\minF \geq n$.
\end{proof}

\Ack


\bibliographystyle{abbrv}



\footnotesize {

}

\Addresses

\end{document}